\DeclareMathAlphabet{\mathbf}{OT1}{cmr}{bx}{it}
\def\be#1\ee{\begin{equation}#1\end{equation}}
\newcommand{\bea}{\begin{eqnarray}}
\newcommand{\eea}{\end{eqnarray}}
\newcommand{\beas}{\begin{eqnarray*}}
\newcommand{\eeas}{\end{eqnarray*}}
\newcommand{\vnull}{\boldsymbol{0}}
\def\@hspace#1{\begingroup\setlength\dimen@{#1}\hskip\dimen@\endgroup}
\renewcommand{\d}{\,\mathrm{d}}
\newcommand{\rev}[1]{{\color{black}{#1}}}
\newcommand{\diag}{\mathrm{diag}}
\title{Model order reduction of layered waveguides \\  
via rational Krylov fitting}
\author{Vladimir Druskin 
  \and Stefan G\"uttel
  \and Leonid~Knizhnerman}
\date{Received: date / Accepted: date}
\institute{Vladimir Druskin \at
              Department of Mathematical Sciences \\
              Worcester Polytechnic Institute\\
              100 Institute Rd\\
              Worcester, MA 01609, USA\\
              \email{vdruskin1@gmail.com}           
           \and
           Stefan G\"uttel \at
              Department of Mathematics\\
              The University of Manchester\\
              Alan Turing Building\\
              Manchester, M13\,9PL, UK\\
              \email{stefan.guettel@manchester.ac.uk}
           \and
           \rev{Leonid Knizhnerman  \at
           Marchuk Institute of Numerical Mathematics\\ 
           Russian Academy of Sciences\\
           Gubkin St.~8\\
           Moscow 119333, Russia\\
           \email{lknizhnerman@gmail.com}}
}
\journalname{}
\begin{document}

\setcounter{page}{1} 

        \bibliographystyle{plain}
        \maketitle

\begin{abstract}
Rational approximation  recently   emerged as an efficient numerical tool for    the solution of exterior wave propagation  problems.   Currently, this technique  is limited to wave media which are invariant along the main propagation direction.   We propose a new model order reduction-based  approach for compressing unbounded waveguides with layered inclusions.
It is based on the solution of a nonlinear rational least squares problem using the RKFIT method. We show that approximants can be converted into an accurate  finite difference representation within a rational Krylov framework. Numerical experiments indicate that RKFIT computes more accurate grids than previous analytic approaches and even works in the presence of pronounced scattering resonances. Spectral adaptation effects allow for finite difference grids with  dimensions near or even below the Nyquist limit.

\keywords{reduced order model \and Helmholtz equation \and  Dirichlet-to-Neumann map \and perfectly matched layer \and rational approximation \and scattering resonance}
\subclass{35J05 \and 65N06 \and 30E10}
\end{abstract}




\section{Introduction}




In this work we present a new  approach to the compression of Dirichlet-to-Neumann (DtN) maps of infinite waveguides with layered inclusions. This approach is inspired by rational approximation techniques from model order reduction (see, e.g., \cite{beattie2017model}), in this case  the RKFIT algorithm for nonlinear rational approximation~\cite{BG15}. 
As a prototypical problem we consider the infinite finite difference (FD) scheme 
\begin{subequations}\label{eq:MRelv}
\begin{eqnarray}
 2h^{-1}\left[ h^{-1}({\mathbf{u}_1 - \mathbf{u}_0}) + \mathbf b \right] &=&  (A+c_0 I) \mathbf{u}_0  \label{eq:MRel1v} \\
 h^{-1}\left[
 h^{-1}({\mathbf{u}_{j+1} - \mathbf{u}_j})  - h^{-1}({\mathbf{u}_{j} - \mathbf{u}_{j-1}})
   \right] &=&   (A+c_j I) \mathbf{u}_j ,
   \ \   j = 1,2,\ldots \label{eq:MRel2v}
\end{eqnarray}
\end{subequations}
where either $\mathbf{u}_0\in\mathbb{C}^N$ or $\mathbf{b}\in\mathbb{C}^N$ is given, $A\in\mathbb{C}^{N\times N}$ is  Hermitian, $c_j=0$ for all $j>L$, and the solution $\{\mathbf{u}_j\}_{j=0}^\infty\subset \mathbb{C}^N$ is assumed to be bounded.  
This problem arises  from the FD discretization of the three-dimensional (indefinite) Helmholtz equation 
$$\nabla^2 u + (k_\infty^2 - c(x)) u = 0$$ 
for $(x,y,z)\in [0,+\infty)\times [0,1]\times [0,1]$ with a compactly supported \emph{offset function} $c(x)$ for the wave number~$k_\infty$ and appropriate boundary conditions.  
Here, the matrix $A$ corresponds to the discretization of the transverse differential operator $-\partial_{yy}^2 - \partial_{zz}^2 - k_\infty^2$ at $x=0$ and is Hermitian indefinite. The variation of the wave number in the $x$-direction is  modelled by varying coefficients $c_j$, with the ``effective'' wave number  $\sqrt{k_\infty^2-c_j}$ at each grid point. 
The DtN operator $F$ for \eqref{eq:MRelv} is defined by the relationship $F \mathbf u_0=\mathbf b.$

Since \eqref{eq:MRelv} is a linear recurrence, $F=f_h(A)$ is a matrix function in~$A$. If $c_j\equiv 0$, the DtN function for  \eqref{eq:MRelv} at $x=0$ is $f_h(\lambda) = \sqrt{\lambda + (h\lambda/2)^2}$. As $h\to 0$ we obtain the DtN function $f(\lambda) = \sqrt{\lambda}$ for the continuous problem. In this case, a near-optimal rational approximant  to $f$ can be constructed analytically \cite{druskin1999gaussian,ingerman2000optimal,DGK16}.
More precisely, let the eigenvalues of  $A$ be contained in the union of two intervals $K = [a_1,b_1] \cup [a_2,b_2]$  with $a_1<b_1<0<a_2<b_2$. 
Then \cite{DGK16} gives an explicit construction of a compound Zolotarev rational function $r_n^{(Z)}$ of type $(n,n-1)$ such that 
\begin{equation}\label{eq:factR}
  \max_{\lambda \in K} | 1 -   r_n^{(Z)}(\lambda)/f(\lambda) | \asymp  \exp\left( - 2\pi^2n /{\log\left(256{a_1 b_2}/({a_2 b_1})\right)} \right) \  \text{as} \  n\to \infty
\end{equation}
for sufficiently large interval ratios $a_1/b_1$ and $b_2/a_2$. 
It is also shown in  \cite{DGK16} that the convergence factor in \eqref{eq:factR} is optimal.  Hence, the approximation error 
$\| f(A) - r_n^{(Z)}(A) \|_2 \leq C  \max_{\lambda \in K} | 1 -   r_n^{(Z)}
(\lambda)/f(\lambda) | $
 decays exponentially  at the same optimal rate. Interestingly, the continued fraction form of  $r_n^{(Z)}$ gives rise to a geometrically meaningful three-point FD scheme. {By ``geometrically meaningful'' we mean that the complex grid points align on a  curve in the complex plane which can be interpreted as a ``smooth'' deformation of the original $x$-coordinate axis.} 
 This is similar to the celebrated  \emph{perfectly matched layers} (PMLs)  which are  introduced via  complex coordinate stretching
 \cite{ChewWeedon,EM79,Ber94,Hagstrom}.

The analytic approach just outlined is  essentially limited to DtN functions such as $\sqrt{\lambda}$ and $\sqrt{\lambda + (h\lambda/2)^2}$. 
Here we aim to overcome this limitation by numerically computing a low-order rational approximant  $r_n(A)\approx f_h(A)$  and converting it into \rev{a} sparse representation in form of a three-point finite difference scheme.\footnote{Another recent  approach for compressing an NtD operator for the Helmholtz equation is based on randomized matrix probing   \cite{doi:10.1137/14095563X}. This approach  has the advantage of handling a rather wide class of multidimensional variable-coefficient problems at the expense of losing the sparse representation.} Our approach is applicable even in cases where the DtN map to be approximated is highly irregular due to the presence of scattering poles. 

 An illustrating example is given in Figure~\ref{fig:waveguide}, where the top panels show the amplitude/phase of the solution of a  waveguide problem on $[0,+\infty)\times [0,1]$, truncated and discretized by $300\times 150$ points. The step size is $h=1/150$ in both coordinate directions. For this problem we have chosen $k_\infty=14$ and $c_j = -9^2$ for the grid points $j=0,1,\ldots,L=150$. An absorbing boundary condition has been fitted to the right end of the domain to mimic the infinite extension $x\to\infty$. The modulus of the associated DtN function $f_h$ is shown in the bottom of Figure~\ref{fig:waveguide} (solid red curve). This function has several singularities between and close to the eigenvalues of the transverse FD matrix $A$ (the eigenvalue positions are indicated by the black dots). In particular, one eigenvalue $\lambda_j\approx 50.5$ is extremely close to a singularity of $f_h$, which can be associated with the near-resonance observed in the left portion of the waveguide. These singularities make it impossible to construct a uniform approximant $r_n\approx f_h$ over the negative and positive spectral subintervals of~$A$. 
Nevertheless, the RKFIT approximant $r_n$ of order $n=8$, also shown in the bottom of Figure~\ref{fig:waveguide} (dashed blue curve), has a relative accuracy $\| f_h(A) \mathbf{u}_0 - r_n(A)\mathbf{u}_0\|_2 /  \|f_h(A) \mathbf{u}_0\|_2 \approx 1.4\cdot 10^{-6}$ for the DtN map. We see that $r_n$ achieves this high accuracy by being close to $f_h$ in the vicinity of the eigenvalues of $A$, but not necessarily in between them. This remarkable \emph{spectral adaptation} is achieved without requiring a spectral decomposition of $A$ explicitly; RKFIT merely requires  matrix-vector products with the DtN map.

\begin{figure}[t]
\hspace*{3mm}\begin{minipage}{16cm}
\hspace*{-3mm}\includegraphics[width=6.1cm]{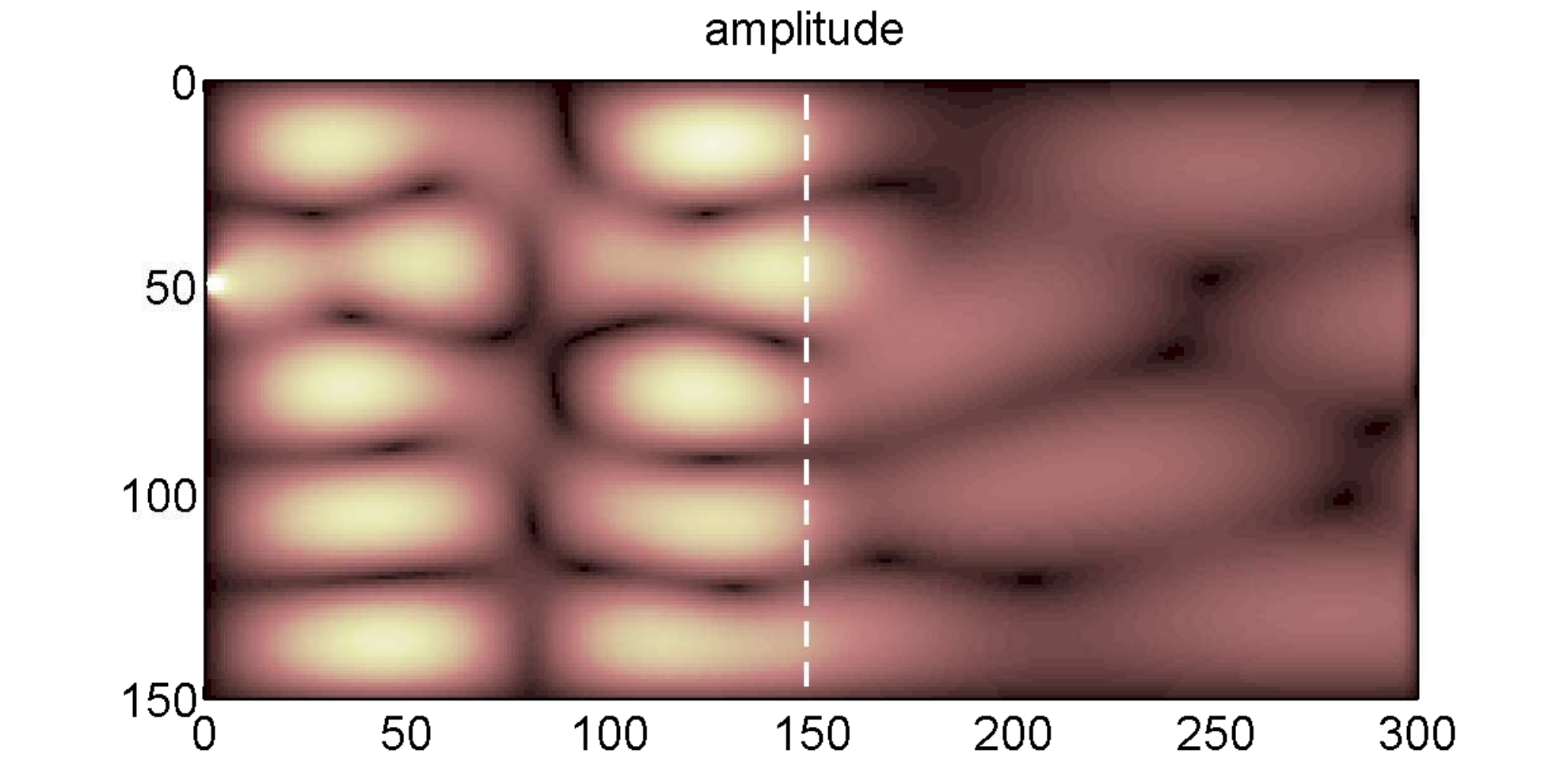}
\hspace*{-4mm}\includegraphics[width=6.1cm]{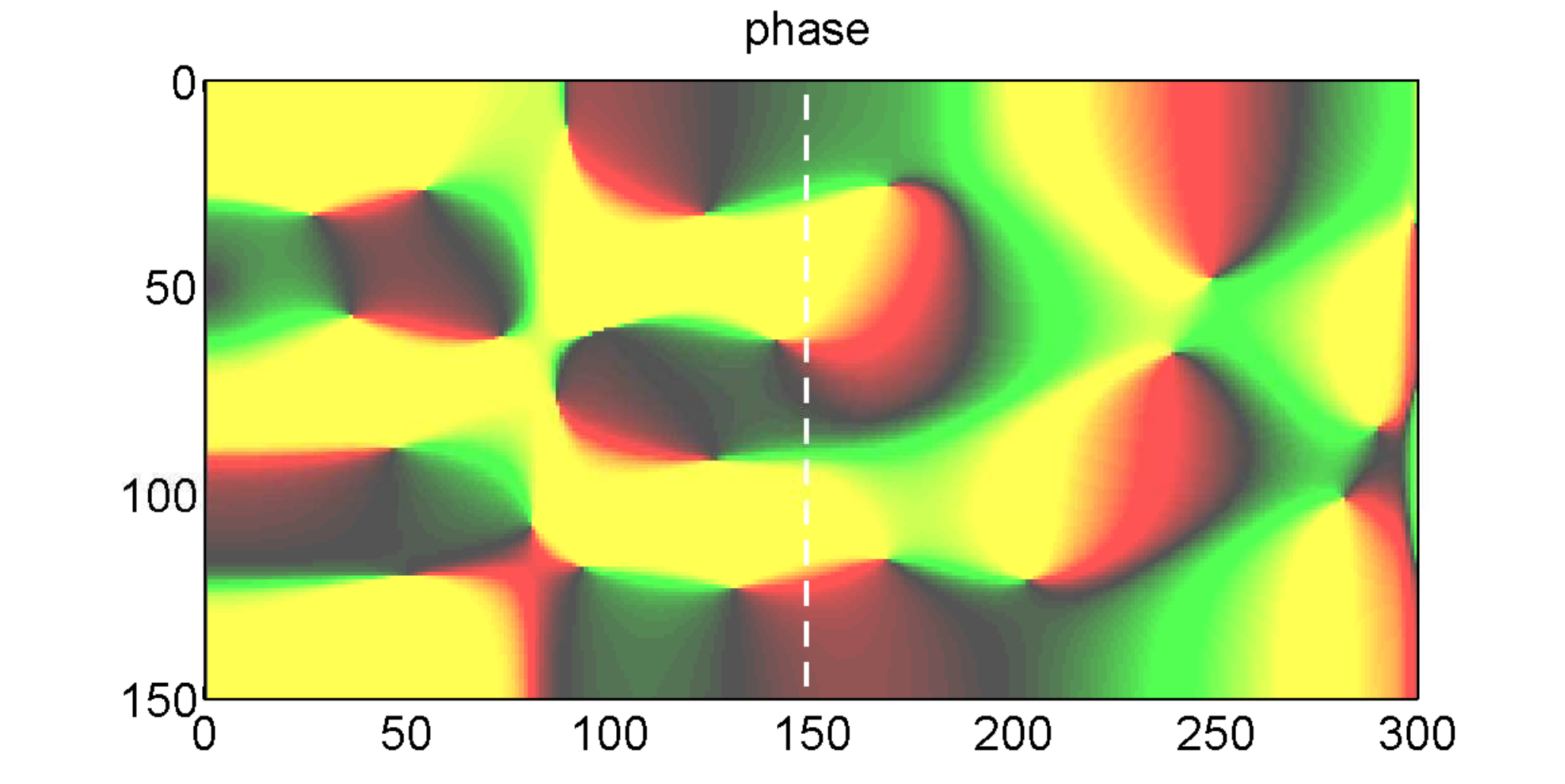}\\[2mm]
\hspace*{-13.5mm}\includegraphics[width=13.6cm]{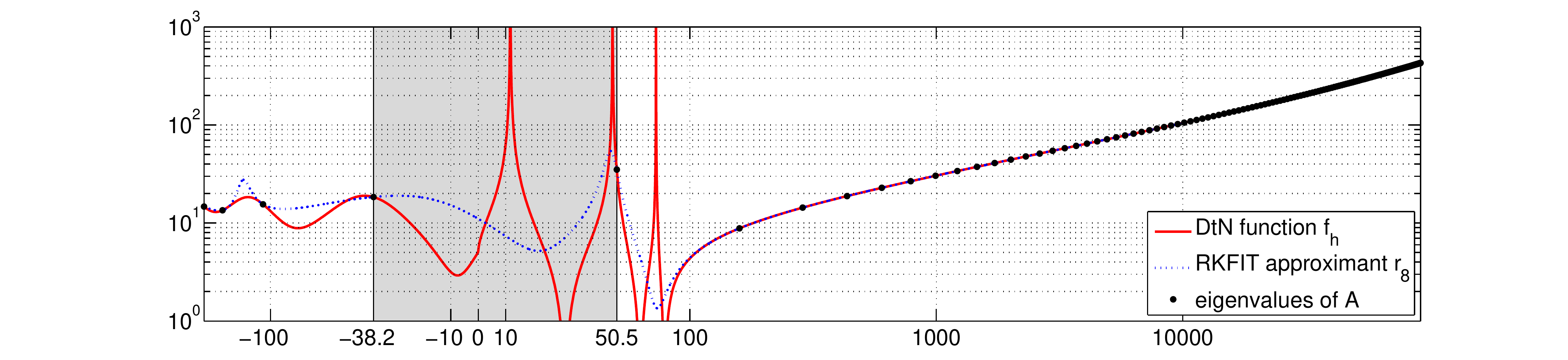}
\end{minipage}
\caption{A waveguide with varying wave number in the $x$-direction (piecewise constant over the first 150 grid points and the remaining grid points until infinity). The top row shows the amplitude and phase of the solution, with the position of the coefficient jump highlighted by vertical dashed line. The bottom shows a plot of the exact DtN function $f_h$ (solid red line) over the spectral interval of the indefinite matrix $A$. The plot is  logarithmic on both axes, with the $x$-axis showing a negative and positive part of the real axis, glued together by the gray linear part in between.   The RKFIT approximant of degree $n=8$ (dotted blue curve) exhibits spectral adaptation to some of $A$'s eigenvalues (black dots).} 
\label{fig:waveguide}
\end{figure}

Our RKFIT approach is also applicable when $A$ is non-Hermitian, which may result from absorbing boundary conditions in the transversal plane.  We demonstrate in several experiments that the RKFIT-FD grids are exponentially accurate as an approximation to the full FD scheme, with only  a small number of grid points required for practical accuracy. As a result of spectral adaptation effects, the Nyquist limit of two grid points per wavelength  does not fully apply to RKFIT-FD grids.  For the problem in  Figure~\ref{fig:waveguide}, for example, we computed an RKFIT-FD grid of only $n=8$  points which accurately (to about six digits of  relative accuracy) mimics the response of the full variable-coefficient waveguide discretized by $300$ grid points in the $x$-direction. This is a significant compression of the full grid.

\rev{The} rest of this paper is structured as  follows: in section~\ref{sec:fdcf} we derive analytic expressions of  DtN maps for constant- and variable-coefficient media. We relate the optimization of these \rev{DtN maps  to}  approximation problems. Section~\ref{sec:fdrk} establishes a new connection between rational Krylov spaces and FD grids. In section~\ref{sec:rkfit} we tailor the  RKFIT  algorithm to our specific application. 
Sections~\ref{sec:conv1} and \ref{sec:conv2}  study the convergence behaviour of the algorithm. In section~\ref{sec:disc} we discuss the numerical results and compare  them to the Nyquist limit and other (spectral) discretization schemes. In the appendix we give a rational approximation interpretation of the Nyquist limit and explain why  this limit is not necessarily strict for RKFIT-FD grids.



\section{From DtN maps to continued fractions and FD grids}\label{sec:fdcf}
There is an \rev{intimate} connection between FD grids and rational functions.  
To see this, let us first consider the scalar ODE $u''(x) = \lambda u(x)$ on $x\geq 0$ and its  FD discretization
\begin{equation}\label{eq:grid1}
h^{-1}\left[ h^{-1}(u_{j+1}-u_j) - h^{-1}(u_j-u_{j-1})\right] = \lambda u_j, \quad j = 1,2,\ldots,
\end{equation}
where $\lambda$ and $u_0$ are given constants  and  we demand that $u_n$ remains bounded as $n\to \infty$.  This linear recurrence  is a scalar version of \eqref{eq:MRel2v} with $c\equiv 0$. It can easily be solved  by computing the roots of the  characteristic polynomial $p(t) = (t^2 - (2+h^2\lambda) t + 1)/h^2$ and choosing the  solution
$u_j =  ( 1 + {h^2\lambda}/{2} - h\sqrt{\lambda +h^2\lambda^2/4} 
 )^j \cdot u_0$. 
Indeed this is the only solution that decays for  $\lambda > 0$. Moreover, this solution is bounded under the condition\footnote{This is an interesting condition in the indefinite Helmholtz case, where the role of $\lambda$ is  played by the eigenvalues of the shifted Laplacian $-\nabla^2 - k^2$ and  $k$ is the wave number. Because we require $\lambda \geq -4/h^2$, we have a condition $k^2 \leq 4/h^2$ on the wave number, which is equivalent to  $kh \leq 2$.  The solution of the Helmholtz equation in a homogeneous medium has wave length $\ell = 2\pi/k$. Hence the number of FD grid points per wavelength, $n = \ell/h$, must satisfy $n = \ell/h = 2\pi/(kh) \geq \pi$ in order to approximate a bounded oscillatory solution.}
  $\lambda \geq -4/h^2$ and unbounded for  $\lambda < -4/h^2$.

We can use the explicit solution $\{u_j\}$ to extract interesting  information about the problem. For example, from the FD relation $
2h^{-1} \left[ h^{-1}(u_1 - u_0) + b \right] = \lambda u_0$, the scalar version of \eqref{eq:MRel1v}, 
we obtain an approximation $b$ to the Neumann boundary data $-u'(x=0)$ for the continuous analogue of the FD scheme. Eliminating $u_1$ using the above formula, we can directly relate  $u_0$ and $b$ via
$b = \sqrt{ \lambda + {h^2\lambda^2}/{4}}\, u_0 =: f_h(\lambda) u_0$. 
We refer to $f_h$ as the \emph{DtN function} or  \emph{discrete impedance function}.  
By letting $h\to \infty$ we recover the DtN relation $b = \sqrt{\lambda} u_0 =: f(\lambda) u_0$ and indeed $b = -u'(0)$ for the continuous solution $u(x) = \exp(-x\sqrt{\lambda}) u_0$. 


Now let us  turn  to the variable-coefficient problem \eqref{eq:MRelv} in scalar form:
\begin{subequations}\label{eq:vcdtn0}
\begin{eqnarray}
 2h^{-1}\left[ h^{-1}({u}_1 - {u}_0) + b \right] &=&  (\lambda+c_0) {u}_0  \\
h^{-1}\left[h^{-1}
 ({u}_{j+1} - {u}_j) - h^{-1}(u_{j} - u_{j-1})
        \right] &=&   (\lambda+c_j) {u}_j ,
        \quad   j = 1,2,\ldots. 
\end{eqnarray}
\end{subequations}
By eliminating the grid points with indices $j>L$ (where $c_j=0$) we find the DtN relation $b/u_0 = f_h(\lambda)$ in continued fraction form  
\rev{
\begin{small}
\begin{equation}\label{eq:vcdtn}
 f_h(\lambda) = 
    \frac{h (\lambda+c_0)}{2} + \cfrac{1}
                      {h + \cfrac{1}
                             {  h (\lambda+c_1) + \cfrac{1} {h + \cdots + \cfrac{1}
                                                                   {  h (\lambda+c_L) + \cfrac{1}{h + \cfrac{1}{\cfrac{h\lambda}{2} + \sqrt{\lambda + \frac{h^2 \lambda^2}{4}}}}}}}}\,.
\end{equation}
\end{small}
}

\noindent In view of the original vector-valued problem \eqref{eq:MRelv}, the role of $\lambda$ is played by the eigenvalues of the matrix~$A$. 
When employing a rational approximant $r_n\approx f_h$ it hence seems reasonable to be accurate on the \emph{spectral region of $A$.}  
For example, if $A$ is diagonalizable as $A = X \diag (\lambda_1,\lambda_2,\ldots, \lambda_N) X^{-1}$, we have 
$\| f_h(A) - r_n(A) \|_2 \leq \|X\|_2 \|X^{-1}\|_2 \max_{1\leq j \leq N} |f_h(\lambda_j) - r_n(\lambda_j) |$. 
Hence if the condition number $\kappa(X) = \|X\|_2 \|X^{-1}\|_2$ is moderate, we can bound the accuracy of $r_n(A)$  using a scalar approximation problem on the eigenvalues $\lambda_j$.   
The rational approximant $r_n$ can be viewed as a reduced order model of  \eqref{eq:vcdtn0} where {the} spectral parameter $\lambda$ of the transversal operator is an equivalent of   the temporal (Laplace) frequency in linear time invariant dynamical systems (see, e.g., \cite{beattie2017model}).

%
%

\section{From FD grids to rational Krylov spaces}\label{sec:fdrk}
The crucial observation for optimizing the rational approximant $r_n \approx f_h$ of a DtN function, or equivalently its associated FD grid, is that the grid steps do not need to be equispaced, and not even real-valued.  
Consider the FD scheme 
\begin{subequations}\label{eq:Rel}
\begin{eqnarray}
{\widehat h_0}^{-1}\left[({u}_1 - {u}_0) + b \right] &=&  \lambda {u}_0 \label{eq:Rel1} \\
{\widehat h_j}^{-1}\left[
h_{j+1}^{-1} ({u}_{j+1} - {u}_j)  - h_{j}^{-1}({u}_{j} - {u}_{j-1})
        \right] &=&   \lambda {u}_j ,
        \quad   j = 1,\ldots,n-1 \label{eq:Rel2}
\end{eqnarray}
\end{subequations}
with arbitrary complex-valued primal and dual grid steps $h_j$ and $\widehat h_{j-1}$ ($j=1,2,\ldots,n$), respectively. 
The continued fraction form of the associated DtN maps, derived in exactly the same manner as for the case of constant $h$ in section~\ref{sec:fdcf}, is
\rev{
\begin{equation}\label{eq:contfrach}
  r_n(\lambda)   = 
    \widehat h_0 \lambda  + \cfrac{1}
         {h_1 + \cfrac{1}
                {\widehat h_1  \lambda + \cfrac{1} 
                {h_2 + \cdots + \cfrac{1}{\widehat h_{n-1} \lambda + \cfrac{1}{h_n}} }}}\,.
\end{equation}}
This is a rational function of type $(n,n-1)$, i.e., a quotient $p_n/q_{n-1}$ of polynomials of degree $n$ and $n-1$, respectively. By choosing the free grid steps we can optimize it for our purposes. In particular, we can tune \eqref{eq:Rel} so that it implements a rational approximation to any DtN map, even if the associated analytic DtN function $f_h$ is complicated. To this end, we need a robust method for computing such rational approximants and a numerical conversion into continued fraction form.

The vector form of \eqref{eq:Rel}  is 
\begin{subequations}\label{eq:Relv}
\begin{eqnarray}
{\widehat h_0}^{-1}\left[ h_1^{-1}(\mathbf{u}_1 - \mathbf{u}_0) + \mathbf b \right] &=&  A \mathbf{u}_0  \label{eq:Rel1v} \\
{\widehat h_j}^{-1}\left[
h_{j+1}^{-1} (\mathbf{u}_{j+1} - \mathbf{u}_j)  - h_{j}^{-1}(\mathbf{u}_{j} - \mathbf{u}_{j-1})
        \right] &=&   A \mathbf{u}_j ,
        \quad   j = 1,\ldots,n-1. \label{eq:Rel2v}
\end{eqnarray}
\end{subequations}
Again, $\mathbf{b} = r_n(A)\mathbf{u}_0$ with a rational function $r_n=p_n/q_{n-1}$ whose continued fraction form \eqref{eq:contfrach} involves the grid steps $h_j$ and $\widehat h_{j-1}$. 
The vectors $\mathbf{u}_j$ and $\mathbf{b}= r_n(A) \mathbf{u}_0$ satisfy a \emph{rational Krylov decomposition} 
\begin{equation}\label{eq:arnU}
        A U_{n+1}\underline{\widetilde K_n} = U_{n+1} \underline{\widetilde H_n},
\end{equation}
where $U_{n+1} = [ \, r_n(A)\mathbf{u}_0\, |\, \mathbf{u}_{0}\, |\, \mathbf{u}_{1}\, |\, \cdots\, |\, \mathbf{u}_{n-1}\, ]\in\mathbb{C}^{N\times (n+1)}$ and 
 $\underline{\widetilde K_n}, \underline{\widetilde H_n}\in\mathbb{C}^{(n+1)\times n}$ are  
\begin{equation}\label{eq:pic2}
        \hspace*{-1mm}\underline{\widetilde K_n} = 
        \begin{bmatrix}
         0 &  \\
        \widehat h_0 &   &   &      \\
          & \widehat h_1 &   &      \\
          &   & \ddots &    \\
          &   &   & \widehat h_{n-1}   \\
        \end{bmatrix}, \ \underline{\widetilde H_n} = 
        \begin{bmatrix}
        1 & \\
        { -h_1^{-1}} & { h_1^{-1}} &   &     \\
        { h_1^{-1}} & { -h_1^{-1} - h_2^{-1}} & \ddots &      \\
          & \ddots & \ddots &   h_{n-1}^{-1}  \\
          &   &  h_{n-1}^{-1} & -h_{n-1}^{-1} - h_n^{-1}   \\
        \end{bmatrix} .
\end{equation}
The entries in  $(\underline{\widetilde H_n},\underline{\widetilde K_n})$ encode the recursion coefficients in \eqref{eq:Rel}   and the columns of $U_{n+1}$ all correspond to rational functions in $A$ multiplied by the vector~$\mathbf{u}_0$. \rev{More precisely, 
\[
\mathrm{colspan}(U_{n+1}) = q_{n-1}(A)^{-1} \mathrm{span} \{ \mathbf{u}_0, A\mathbf{u}_0, \ldots, A^n \mathbf{u}_0 \}
\]
for some denominator polynomial $q_{n-1}$ of degree at most~$n-1$ and with no roots at any of $A$'s eigenvalues.
Such a space is also known as a \emph{rational Krylov space}~\cite{Ruh98}.}
In the next section we will show how to generate decompositions of the form \eqref{eq:arnU} numerically and how to interpret them as FD grids.

\section{The RKFIT approach}\label{sec:rkfit}

Assume that $F,A \in \mathbb{C}^{N\times N}$ are given matrices  and $\mathbf{v}\in\mathbb{C}^N$ with $\| \mathbf{v} \|_2=1$. Our aim is to find a rational approximant $r_n(A)\mathbf{v}$ such that 
\begin{equation}\label{eq:min}
        \| F\mathbf{v} - r_n(A)\mathbf{v} \|_2 \to \min.
\end{equation}
For the purpose of this paper, $F$ is the linear DtN map  and the sought rational function $r_n = p_{n}/q_{n-1}$ is of type $(n,n-1)$. 
As \eqref{eq:min} is a nonconvex optimization problem it may have many solutions, exactly one solution, or no solution at all. However, this difficulty has not prevented the development of algorithms for the (approximate) solution of \eqref{eq:min}; see \cite{BG15} for a discussion of various algorithms. The RKFIT algorithm   \cite{BG14,BG15} is particularly  suited for this task and in this section we shall briefly review it and adapt it to our application.

\subsection{Search and target spaces}
Given a set of \emph{poles} $\xi_1,\xi_2,\ldots,\xi_{n-1}\in\mathbb{C}$ and an associated nodal polynomial $q_{n-1}(\lambda) = \prod_{j=1}^{n-1} (\lambda - \xi_j)$, 
RKFIT makes use of two spaces, namely an $n$-dimensional \emph{search space} $\mathcal{V}_{n}$ defined as
$\mathcal{V}_{n} := q_{n-1}(A)^{-1} \mathcal{K}_{n}(A,\mathbf{v})$,
and an $(n+1)$-dimensional \emph{target space} $\mathcal{W}_{n+1}$ defined as
$\mathcal{W}_{n+1} := q_{n-1}(A)^{-1} \mathcal{K}_{n+1}(A,\mathbf{v})$. 
Here, $\mathcal{K}_{j}(A,\mathbf{v}) = \mathrm{span} 
\{ \mathbf{v},A\mathbf{v},$ $\ldots,A^{j-1}\mathbf{v} \}$ is the standard (polynomial) Krylov space \rev{of dimension $j$ for the matrix~$A$ and starting vector $\mathbf{v}$.} Let $V_{n}\in\mathbb{C}^{N\times n}$ and $W_{n+1}\in\mathbb{C}^{N\times (n+1)}$ be orthonormal bases for $\mathcal{V}_{n}$ and $\mathcal{W}_{n+1}$, respectively. 
 
The space $\mathcal{V}_{n}$ is a rational Krylov space with starting vector $\mathbf v$ and the poles $\xi_1,\ldots,\xi_{n-1}$, i.e., a linear space of type $(n-1,n-1)$ rational functions $(p_j/q_{n-1})(A)\mathbf{v}$, all sharing the same denominator $q_{n-1}$. 
As a consequence, we can arrange the columns of $V_{n}$ such that $V_{n} \mathbf{e}_1 = \mathbf{v}$ and a rational Krylov decomposition
\begin{equation}\label{eq:arnoldi}
        A V_{n} \underline{K_{n-1}} = V_{n} \underline{H_{n-1}}
\end{equation}
is satisfied. \rev{The existence of such a decomposition under the assumption that $\mathcal{V}_{n}$ is a rational Krylov space is shown in \cite[Thm.~2.5]{BG14}. For a given sequence of poles $\xi_1,\ldots,\xi_{n-1}$,  decompositions of this form are computed by Ruhe's rational Krylov sequence (RKS) algorithm~\cite[Section~2]{Ruh98} and its variant described in~\cite[Algorithm~2.1]{BG17parallel}.}  Here, $(\underline{H_{n-1}},\underline{K_{n-1}})$ is an unreduced upper Hessenberg pair of size $n\times (n-1)$, i.e., both $\underline{H_{n-1}}$ and $\underline{K_{n-1}}$ are upper Hessenberg matrices which do not share a common zero element on the subdiagonal. 
The following result, established in \cite[Thm.~2.5]{BG14}, relates the generalized eigenvalues of the lower $(n-1)\times (n-1)$ submatrices of $(\underline{H_{n-1}},\underline{K_{n-1}})$, the poles of the rational Krylov space, and its starting vector.

\smallskip

\begin{theorem}\label{thm:pole}
The generalized eigenvalues of the lower $(n-1)\times (n-1)$ submatrices of $(\underline{H_{n-1}},\underline{K_{n-1}})$ of \eqref{eq:arnoldi} are the poles $\xi_1,\ldots,\xi_{n-1}$ of the rational Krylov space $\mathcal{V}_{n}$ with starting vector $\mathbf v$.

Conversely, let a decomposition $A\widehat V_n  \underline{\widehat K_{n-1}} = \widehat V_{n} \underline{\widehat H_{n-1}}$  with $\widehat V_n\in\mathbb{C}^{N\times n}$ of full column rank and an unreduced upper Hessenberg pair $(\underline{\widehat H_{n-1}},\underline{\widehat K_{n-1}})$ be given. Assume further that none of the generalized eigenvalues $\widehat \xi_j$ of the lower $(n-1)\times (n-1)$ submatrices of $(\underline{\widehat H_{n-1}},\underline{\widehat K_{n-1}})$ coincides with an eigenvalue of $A$. Then the columns of $\widehat V_n$ form a basis for a rational Krylov space with starting vector $\widehat V_n\mathbf{e}_1$ and poles $\widehat \xi_j$. 
\end{theorem}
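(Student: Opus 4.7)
My plan has two parts, the forward and the converse direction, and both are streamlined by the following structural observation. Since $(\underline{\widetilde H_{n-1}}, \underline{\widetilde K_{n-1}})$ is an $n\times(n-1)$ upper Hessenberg pair, deleting the top row yields two $(n-1)\times(n-1)$ \emph{upper triangular} matrices. Hence the generalized eigenvalues of the lower submatrices are simply the ratios of diagonal entries, which coincide with the subdiagonal ratios $\widehat\xi_j := \widehat h_{j+1,j}/\widehat k_{j+1,j}$ of the original pair (with the convention $\widehat\xi_j=\infty$ when $\widehat k_{j+1,j}=0$; the unreduced hypothesis forbids both entries to vanish simultaneously). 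The theorem thus reduces to linking these subdiagonal ratios to the poles of the rational Krylov space.

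For the forward direction I would invoke Ruhe's RKS construction: the basis vector $v_{j+1}$ of $\mathcal V_n$ is obtained at step~$j$ by computing $w=(A-\xi_j I)^{-1}V_j \mathbf t$ for some $\mathbf t\in\mathbb{C}^j$ and orthogonalizing $w$ against the previous columns. The identity $(A-\xi_j I)w=V_j \mathbf t$, rewritten as $Aw=\xi_j w + V_j \mathbf t$ and expressed in the basis~$V_{j+1}$, furnishes the $j$-th column of~\eqref{eq:arnoldi} with subdiagonal entries satisfying $\underline H_{j+1,j}/\underline K_{j+1,j}=\xi_j$. The infinite-pole case ($\xi_j=\infty$, i.e.\ a polynomial Krylov step) corresponds to $\underline K_{j+1,j}=0$ and is handled identically.

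For the converse I would proceed by induction on $j=1,\ldots,n$ to show that $\widehat v_j := \widehat V_n e_j$ lies in $q_{j-1}(A)^{-1}\mathcal K_j(A,\widehat v_1)$, where $q_{j-1}(\lambda)=\prod_{i<j,\,\widehat\xi_i\neq\infty}(\lambda-\widehat\xi_i)$. The base case $j=1$ is trivial. For the inductive step, reading off the $j$-th column of $A\widehat V_n\underline{\widehat K_{n-1}}=\widehat V_n\underline{\widehat H_{n-1}}$ gives
\[
(\widehat h_{j+1,j}I-\widehat k_{j+1,j}A)\,\widehat v_{j+1}=-\sum_{i=1}^{j}(\widehat h_{i,j}I-\widehat k_{i,j}A)\,\widehat v_i.
\]
The right-hand side lies in $q_{j-1}(A)^{-1}\mathcal K_{j+1}(A,\widehat v_1)$ by the inductive hypothesis. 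The operator on the left equals $-\widehat k_{j+1,j}(A-\widehat\xi_j I)$ when $\widehat k_{j+1,j}\neq 0$, which is invertible because $\widehat\xi_j$ is not an eigenvalue of~$A$ by hypothesis; otherwise it equals $\widehat h_{j+1,j}I$, trivially invertible by the unreduced assumption. Inverting extends the denominator by the factor $(\lambda-\widehat\xi_j)$ in the finite-pole case, placing $\widehat v_{j+1}$ in $q_j(A)^{-1}\mathcal K_{j+1}(A,\widehat v_1)$. The full column rank of $\widehat V_n$ then forces its columns to constitute a basis of the $n$-dimensional space $q_{n-1}(A)^{-1}\mathcal K_n(A,\widehat V_n e_1)$, as claimed.

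The main obstacle is the bookkeeping in the converse induction: one must handle pole-at-infinity steps uniformly with finite-pole steps, and verify that the rational Krylov space actually attains the full dimension~$n$ rather than sitting inside a strictly smaller space. Both hypotheses in the theorem feed directly into this: the assumption that no generalized eigenvalue coincides with an eigenvalue of~$A$ is what guarantees the invertibility of $(A-\widehat\xi_j I)$ at each step, and the full-rank assumption on $\widehat V_n$ is what pins down the dimension.
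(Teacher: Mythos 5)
The paper does not prove this theorem itself; it imports it verbatim from \cite[Thm.~2.5]{BG14}, and your argument is essentially the standard proof given there: read the decomposition column by column, observe that the generalized eigenvalues of the lower submatrices of an unreduced Hessenberg pair are the subdiagonal ratios $\widehat h_{j+1,j}/\widehat k_{j+1,j}$, and run the induction $(\widehat h_{j+1,j}I-\widehat k_{j+1,j}A)\widehat v_{j+1}=-\sum_{i\leq j}(\widehat h_{i,j}I-\widehat k_{i,j}A)\widehat v_i$ to pick up one pole factor per column. Your converse direction is complete and correct, including the two points you rightly flag (invertibility of $A-\widehat\xi_jI$ from the eigenvalue hypothesis, and full dimension from the rank hypothesis).

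The one soft spot is the forward direction. What you actually prove is that the \emph{RKS-generated} decomposition has subdiagonal ratios equal to the poles, i.e., that \emph{some} decomposition of $\mathcal V_n$ has this property. The theorem as stated asserts it for the decomposition \eqref{eq:arnoldi}, which is only required to have $V_n\mathbf e_1=\mathbf v$ and an unreduced Hessenberg pair; a priori a different such decomposition of the same space could exhibit different ratios. To close this, either restrict the claim to the RKS decomposition (which is how the surrounding text of the paper uses it), or apply your own converse to the given decomposition and then invoke the uniqueness of the poles of a rational Krylov space with fixed starting vector (Lemma~2.3 in \cite{BG14}): since the columns of $V_n$ span $\mathcal V_n=q_{n-1}(A)^{-1}\mathcal K_n(A,\mathbf v)$ and, by the converse, also a rational Krylov space with denominator built from the subdiagonal ratios, the two denominators agree up to scaling, so the ratios are exactly $\xi_1,\ldots,\xi_{n-1}$. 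With that one additional lemma cited or proved, your argument is a complete proof of the theorem.
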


\subsection{Pole relocation and projection step}

The main component of RKFIT is a pole relocation step  based on Theorem~\ref{thm:pole}.
Assume that a guess for the denominator polynomial $q_{n-1}$ is available and orthonormal bases $V_{n}$ and $W_{n+1}$ for the spaces $\mathcal{V}_{n}$ and $\mathcal{W}_{n+1}$ have been computed. Then we can identify a vector $\mathbf{\widehat v}\in\mathcal{V}_n$, $\|\mathbf{\widehat v}\|_2 = 1$, such that  
$F\mathbf{\widehat v}$ is best approximated by some vector in $\mathcal{W}_{n+1}$. More precisely, we can find a coefficient vector $\mathbf{c}_n\in\mathbb{C}^n$, $\|\mathbf{c}_n\|_2=1$, such that
$\| (I_N - W_{n+1}W_{n+1}^*)F V_n \mathbf{c}_n \|_2 \to \min$.
The vector $\mathbf{c}_n$ is given as a right singular vector of $(I_N - W_{n+1}W_{n+1}^*)F V_n$ corresponding to a smallest singular value. 

Assume that a ``sufficiently good'' denominator $q_{n-1}$ of $r_n = p_n/q_{n-1}$ has been found. Then the problem of finding the numerator $p_{n}$ such that $\| F\mathbf v - r_n(A)\mathbf v \|_2$ is minimal becomes a linear one. Indeed, the vector $r_n(A)\mathbf v := W_{n+1} W_{n+1}^* F \mathbf v$ corresponds to the orthogonal projection of $F\mathbf v$ onto $\mathcal{W}_{n+1}$ and its representation in the rational Krylov basis $W_{n+1}$ is 
\begin{equation}\label{eq:rnAv}
    r_n(A)\mathbf v = W_{n+1} \mathbf{c}_{n+1}, \quad  \text{where} \ \ \ \mathbf{c}_{n+1} := W_{n+1}^* F \mathbf v.
\end{equation}
The pseudocode for a single RKFIT iteration is given in Algorithm~\ref{alg:rkfit}. A MATLAB implementation is contained in the Rational Krylov Toolbox  \cite{BG14b} which is available online at \url{http://rktoolbox.org}.

\begin{algorithm}[t!]
  \caption{One RKFIT iteration for superdiagonal approximants.}
  \label{alg:rkfit}
  \begin{algorithmic}[1]
    \REQUIRE Matrices $A, F\in\mathbb{C}^{N\times N}$, nonzero $\mathbf v\in\mathbb{C}^N$, and initial poles $\xi_1,\xi_2,\ldots,\xi_{n-1}\in\mathbb{C}\setminus\Lambda(A)$ (in the first iteration it is recommended to initialize all poles at $\infty$).\\[1.5mm]
    \ENSURE Improved poles $\widehat \xi_1,\widehat \xi_2,\ldots,\widehat \xi_{n-1}$.
    \vskip 5 pt
    \STATE Compute a rational Krylov decomposition $AW_{n+1} \underline{K_n} =  W_{n+1} \underline{H_n}$
    with $W_{n+1}\mathbf e_1=\mathbf v/\|\mathbf v\|_2$ and poles $\xi_1,\xi_2,\ldots,\xi_{n-1},\infty$. \\[1.5mm]
    \STATE Define $V_n = W_{n+1} [ \, I_n\, |\, \vnull\,  ]^T$.\\[1.5mm]
    \STATE Compute a right singular vector $\mathbf c_n\in\mathbb{C}^{n}$ of $(I - W_{n+1} W_{n+1}^*) F V_{n}$
    corresponding to a smallest singular value.\\[1.5mm]
    \STATE {Form $A\widehat V_{n}\underline{\widehat H_{n-1}} = \widehat V_{n}\underline{\widehat K_{n-1}}$
    spanning $\mathcal{R}(V_{n})$ with $\widehat V_n \mathbf e_1 = V_{n}\mathbf c_n$.}\\[1.5mm]
    \STATE Compute $\widehat \xi_1, \widehat \xi_2,\ldots, \widehat \xi_{n-1}$ as the generalized eigenvalues of the lower $(n-1)\times (n-1)$ part of 
    $(\underline{\widehat H_{n-1}},\underline{\widehat K_{n-1}})$.
  \end{algorithmic}
\end{algorithm}

\subsection{Conversion to continued fraction form}\label{subsec:trafo}

Similarly to what we did in \eqref{eq:arnoldi}, we can arrange the columns of $W_{n+1}$ so that 
$W_{n+1} \mathbf{e}_1 = \mathbf{v}$ and a rational Krylov decomposition
\begin{equation}\label{eq:arnoldiw}
        A W_{n+1} \underline{K_{n}} = W_{n+1} \underline{H_{n}}
\end{equation}
is satisfied, where $(\underline{H_{n}},\underline{K_{n}})$ is an unreduced upper Hessenberg pair of size $(n+1)\times n$. Indeed, we have $\mathcal{V}_n \subset \mathcal{W}_{n+1}$ and $\mathcal{W}_{n+1}$ is a rational Krylov space with starting vector~$\mathbf v$, finite poles $\xi_1,\ldots,\xi_{n-1}$, and a formal additional ``pole'' at $\infty$.

Our aim is to transform the decomposition~\eqref{eq:arnoldiw} so that it can be identified with \eqref{eq:arnU} when $\mathbf{u}_0 = \mathbf{v}$. This transformation should not alter the space $\mathcal{W}_{n+1}$ but merely transform the basis $W_{n+1}$ into the continued fraction basis $U_{n+1}$ and the pair $(\underline{H_n},\underline{K_n})$ into the tridiagonal-and-diagonal form of \eqref{eq:pic2}. 

First we transform~\eqref{eq:arnoldiw} so that $r_n(A)\mathbf{v}$ defined in \eqref{eq:rnAv} becomes the first vector in the rational Krylov basis, and  $\mathbf{v}$ the second. To this end, we define the transformation matrix 
$X = [ \, \mathbf{c}_{n+1}\, |\, \mathbf{e}_1\, |\, \mathbf{x}_3\, |\, \cdots\, \, \mathbf{x}_{n+1} ] \in\mathbb{C}^{(n+1)\times (n+1)}$ 
with the columns $\mathbf{x}_3,\ldots,\mathbf{x}_{n+1}$ chosen freely but so that $X$ is invertible,  and rewrite \eqref{eq:arnoldiw} by inserting $X X^{-1}$:
\begin{equation}\label{eq:arnoldi2}
        A  W_{n+1}^{(0)} \underline{K_n^{(0)}} = W_{n+1}^{(0)} \underline{H_n^{(0)}},
\end{equation}
where $W_{n+1}^{(0)} = W_{n+1} X$, $\underline{K_n^{(0)}} = X^{-1} \underline{K_n}$ and $\underline{H_n^{(0)}} = X^{-1} \underline{H_n}$.
By construction, the transformed rational Krylov basis $W_{n+1}^{(0)}$ is of the form
$W_{n+1}^{(0)} = \big[ \, r_n(A)\mathbf{v} \, |\, \mathbf{v} \, |  * \, | \, \cdots \, | \, *\,   \big]\in\mathbb{C}^{N\times (n+1)}$. 
The transformation to \eqref{eq:arnoldi2} has potentially destroyed the upper Hessenberg structure of the decomposition and $(\underline{H_n^{(0)}},\underline{K_n^{(0)}})$ generally is a dense $(n+1)\times n$ matrix pair. Here is a pictorial view of decomposition \eqref{eq:arnoldi2} for the case $n=4$:
\begin{equation}\label{eq:pic1}
        A W_{n+1}^{(0)} 
        \begin{bmatrix}
        * & * & * & *  \\
        * & * & * & * \\
        * & * & * & *  \\
        * & * & * & * \\
        * & * & * & *  \\
        \end{bmatrix}
        =
        W_{n+1}^{(0)} 
        \begin{bmatrix}
        * & * & * & *  \\
        * & * & * & *  \\
        * & * & * & *  \\
        * & * & * & *  \\
        * & * & * & *  \\
        \end{bmatrix}.
\end{equation}

We now transform  $(\underline{H_n^{(0)}},\underline{K_n^{(0)}})$ into tridiagonal-and-diagonal form by successive right and left multiplication, giving rise to pairs  $(\underline{H_n^{(j)}},\underline{K_n^{(j)}})$ ($j=1,2,\ldots,5$) all corresponding to the same rational Krylov space $\mathcal{W}_{n+1}$ and all without the two leading vectors in $W_{n+1}^{(0)}$ being altered. More precisely, the  allowed transformations are:
\begin{itemize}
\item \underline{right-multiplication of the pair} by any invertible matrix $R\in\mathbb{C}^{n\times n}$,
\item \underline{left-multiplication of the pair} by an invertible matrix $L\in\mathbb{C}^{(n+1)\times (n+1)}$, the first two columns of which are $[\, \mathbf{e}_1\, |\, \mathbf{e}_2\, ]$. This ensures that inserting $L^{-1}L$ into the decomposition will not alter the leading two vectors $[\, r_n(A)\mathbf{v}\, |\, \mathbf{v} \, ]$ in the rational Krylov basis.
\end{itemize}

\noindent Here are the transformations we perform:
\begin{enumerate}
\item 
We right-multiply the pair $(\underline{H_n^{(0)}},\underline{K_n^{(0)}})$ by the inverse of the lower $n\times n$ part of 
$\underline{K_n^{(0)}}$, giving rise to  $(\underline{H_n^{(1)}},\underline{K_n^{(1)}})$ (we now only show a pictorial view of the transformed pairs):
\[
        A W_{n+1}^{(1)} \begin{bmatrix}
        {\small 0} & * & * & * \\
        {\small 1} &   &   &       \\
          & {\small 1} &   &       \\
          &   & {\small 1} &       \\
      &   &   & {\small 1}     \\
        \end{bmatrix} 
        = W_{n+1}^{(1)} 
        \begin{bmatrix}
        * & * & * & *   \\
        * & * & * & *   \\
        * & * & * & *   \\
        * & * & * & *   \\
        * & * & * & *   \\
        \end{bmatrix}.
\]
The Krylov basis matrix $W_{n+1}^{(1)} = W_{n+1}^{(0)} = [ \, r_n(A)\mathbf{v}\, | \, \mathbf{v} \, |\, * \, | \, \cdots \, | \, *\, ]$ has  not changed. 
The $(1,1)$ element of the transformed matrix $\underline{K_n^{(1)}} = [ k_{ij}^{(1)} ]$ is automatically zero because the  decomposition states that  the linear combination $k_{11}^{(1)} Ar_n(A)\mathbf{v} + k_{21}^{(1)} \mathbf{v}$ is in the column span of $W_{n+1}^{(1)}$, a space of type $(n,n-1)$ rational functions. This linear combination is a type $(n+1,n-1)$ rational function unless $k_{11}=0$.\\[-1mm]
\item We left-multiply the pairs to zero the first row of $\underline{K_n^{(1)}}$ completely. This can be done by adding multiples of the 3rd, 4th,\,\dots,\,$(n+1)$th row to the first. As a result we obtain
\begin{equation}\label{eq:h2}
        A W_{n+1}^{(2)}  \begin{bmatrix}
        {\small 0}  &   &   &      \\
        {\small 1} &   &   &      \\
          & {\small 1} &   &      \\
          &   & {\small 1} &      \\
      &   &   & {\small 1}    \\
        \end{bmatrix} = 
        W_{n+1}^{(2)} \begin{bmatrix}
        * & * & * & *  \\
        * & * & * & *  \\
        * & * & * & *  \\
        * & * & * & *  \\
        * & * & * & *  \\
        \end{bmatrix}.
\end{equation}
This left-multiplication does not affect the leading two columns of the Krylov basis, hence $W_{n+1}^{(2)}$ is still of the form $W_{n+1}^{(2)} = [ \, r_n(A)\mathbf{v}\, | \, \mathbf{v} \, |\, * \, | \, \cdots \, | \, *\, ]$.\\[-1mm]
\item We  right-multiply the pair to zero all elements in the first row of $\underline{H_n^{(2)}}$ except the $(1,1)$ entry, which we can assume to be nonzero (see Remark~\ref{rem:h2}). This can be done by adding multiples of the first column to the 2nd, 3rd,\,\dots ,\,$n$th column.   As a result we have 
\[
        A W_{n+1}^{(3)} \begin{bmatrix}
         {\small 0} &   &   &    \\
        {\small 1} & * & * & *  \\
          & {\small 1} &   &      \\
          &   & {\small 1} &      \\
      &   &   & {\small 1}    \\
        \end{bmatrix} = 
        W_{n+1}^{(3)} \begin{bmatrix}
        * &   &   &      \\
        * & * & * & *  \\
        * & * & * & *  \\
        * & * & * & *  \\
        * & * & * & *  \\
        \end{bmatrix}.
\]
Again, this right-multiplication has not affected $W_{n+1}^{(3)}=W_{n+1}^{(2)}$. \\[-1mm]
\item With a further left-multiplication, adding multiples of the 3rd, 4th,\dots,\,\mbox{$(n+1)$st} row to the second row, we can zero all the entries in the second row of $\underline{K_n^{(3)}}$, except the entry in the $(2,1)$ position:
 \[
        A W_{n+1}^{(4)}  \begin{bmatrix}
         {\small 0} &   &   &      \\
        {\small 1} &   &   &      \\
          & {\small 1} &   &      \\
          &   & {\small 1} &      \\
      &   &   & {\small 1}    \\
        \end{bmatrix} = W_{n+1}^{(4)} 
        \begin{bmatrix}
        * &   &   &    \\
        * & * & * & *  \\
        * & * & * & *  \\
        * & * & * & *  \\
        * & * & * & *  \\
        \end{bmatrix}.
\]
Note that $\underline{H_n^{(4)}}$ still has zero entries in its first row. Also, $W_{n+1}^{(4)}$ is still of the form $W_{n+1}^{(4)} = [ \, r_n(A)\mathbf{v}\, | \, \mathbf{v} \, |\, * \, | \, \cdots \, | \, *\, ]$.\\[-1mm]
\item We apply the two-sided Lanczos algorithm with the lower $n\times n$ part of $\underline{H_n^{(4)}}$, using $\mathbf{e}_1$ as the left and right starting vector. This produces biorthogonal matrices $Z_L,Z_R\in\mathbb{C}^{n\times n}$, $Z_L^H Z_R=I_n$. Left-multiplying the decomposition with $\mathrm{blkdiag}(1,Z_L^H)$ and right-multiplication with $Z_R$ results in the demanded structure: 
 \begin{equation} \label{eq:strucwanted}
        A W_{n+1}^{(5)} \begin{bmatrix}
         {\small 0} &   &   &    \\
        {\small 1} &   &   &     \\
          & {\small 1} &   &     \\
          &   & {\small 1} &     \\
      &   &   & {\small 1}   \\
        \end{bmatrix} = W_{n+1}^{(5)}
        \begin{bmatrix}
        * &   &   &     \\
        * & * &   &    \\
        * & * & * &    \\
          & * & * & *   \\
          &   & * & *  \\
        \end{bmatrix}.
\end{equation}
\item Finally, let the nonzero entries of $\underline{H_n^{(5)}}$ be denoted by $\eta_{i,j}$ ($1\leq j\leq n$, $j\leq i\leq j+2$), then we aim to scale these entries so that they are matched with those of the matrix  $\underline{\widetilde H_n}$ in \eqref{eq:pic2}. This can be achieved by left multiplication of the pair with $L=\diag(1,1,\ell_3,\ldots,\ell_{n+1})\in\mathbb{C}^{(n+1)\times (n+1)}$ and right multiplication with $R=\diag(\rho_1,\rho_2,\ldots,\rho_n)\in\mathbb{C}^{n\times n}$. The diagonal entries of $L$ and $R$ are found by equating $\underline{\widetilde H_n}$ in \eqref{eq:pic2} and $L \underline{H_n^{(5)}} R$, starting from the $(1,1)$ entry and going down columnwise. We obtain
$r_1 = 1/\eta_{1,1}$,\ $h_1 = -1/(\eta_{2,1} \rho_1)$,\ 
$\ell_3 = 1/(\eta_{3,1} h_1 \rho_1)$,
and for $j=2,3,\ldots$
$r_j = 1/( \ell_j \eta_{j,j} h_{j-1})$,\ 
$h_j = -1/(1/h_{j-1} + \ell_{j+1} \eta_{j+1,j} \rho_j)$,\ 
$\ell_{j+2} = 1/(\eta_{j+2,j} h_j \rho_j)$.
The diagonal entries of $\underline{\widetilde K_n}$ in \eqref{eq:pic2} satisfy
$\widehat h_{j-1} = \ell_{j+1} \rho_j$,   \ $j = 1,\ldots,n$,
and thus the pair has been transformed exactly into the form \eqref{eq:pic2}.
\end{enumerate}

\smallskip

The above six-step procedure converts the RKFIT approximant  $r_n$ into continued fraction form  and hence allows its interpretation as an FD scheme. This scheme is referred to as an RKFIT-FD grid. Note that all transformations only act on small matrices of size $(n+1)\times n$ and the computation of the tall skinny matrices $W_{n+1}^{(j)}$ is not required if one   only needs the continued fraction parameters. 
 We have extended the Rational Krylov Toolbox by the \texttt{contfrac} method, which implements the conversion of an RKFUN, the fundamental data type to represent and work with rational functions~$r_n$, into continued fraction form following the above transformations. Numerically, these transformations may be ill conditioned and the use of multiple precision arithmetic is recommended. The toolbox supports   MATLAB's Variable Precision Arithmetic  and  the Advanpix Multiprecision Toolbox   \cite{advanpix}.

\begin{remark}\label{rem:h2}
In Step~3 we have assumed that the $(1,1)$ element of $\underline{H_n^{(2)}}$ is nonzero. This assumption is always satisfied: assuming to the contrary that the $(1,1)$ element of $\underline{H_n^{(2)}}$ vanishes, the first column of \eqref{eq:h2} reads $A\mathbf{v} = W_{n+2}^{(2)} [0,*,\ldots,*]^T$. This is a contradiction as the left-hand side of this equation is a superdiagonal rational function in $A$ times $\mathbf{v}$, whereas the trailing $n$ columns of $W_{n+1}^{(2)}$ can be taken to be a basis for $\mathcal{V}_n\subset \mathcal{W}_{n+1}$, which only contains diagonal (and subdiagonal) rational functions in $A$ times $\mathbf{v}$ (provided that all poles $\xi_1,\ldots,\xi_{n-1}$ are finite). 
\end{remark}

\begin{remark}\label{rem:h5}
In Step~5 we have assumed that the lower $n\times n$ part of $\underline{H_n^{(4)}}$ can be tridiagonalized by the two-sided Lanczos algorithm. While this conversion can potentially fail, we conjecture that if $r_n$ admits a continued fraction form \eqref{eq:contfrach} then such an unlucky breakdown cannot occur. (The conditions for the rational function $(r_n(\lambda) - \widehat h_0 \lambda)$ to posses this so-called Stieltjes continued fraction form \cite{Stieltjes1894} are reviewed in  \cite{Holtz}; see Theorem~1.39 therein.) Even if our  conjecture was false, the starting vector $\mathbf{v}$  will typically be chosen at random in our application. So  if an unlucky breakdown occurs, trying again with another vector $\mathbf{v}$ would easily solve the problem. We have not encountered any unlucky breakdowns in our experiments.
\end{remark}

\section{Numerical tests: constant-coefficient case}\label{sec:conv1}

The nonlinear rational least squares problem \eqref{eq:min} is nonconvex and there is no guarantee that a minimizing solution exists, nor that such a solution would be unique. 
As a consequence of these theoretical difficulties and due to the nonlinear nature of RKFIT's pole relocation procedure, a comprehensive convergence analysis seems currently intractable. (An exception is \cite[Corollary~3.2]{BG15}, which states that in exact arithmetic RKFIT converges within a single iteration if $F$ itself is a rational matrix function of appropriate type.) However, for some special cases we can compare the RKFIT approximants to analytically constructed near-best approximants. Here we provide such comparisons to the compound Zolotarev approach in \cite{DGK16}  and the   approximants studied by Newman and Vjacheslavov \cite[Section~4]{PP87}. 

Throughout this section we assume that $A$ is Hermitian with eigenvalues $\lambda_1\leq \lambda_2\leq \cdots \leq \lambda_N$. 
In our discussion of available convergence bounds we will usually focus on the function $f(\lambda)=\sqrt{\lambda}$, however, as has been argued in \cite[Section~5.1]{DGK16}, it is possible to obtain similar bounds for the discrete impedance function $f_h(\lambda) = \sqrt{\lambda + (h\lambda/2)^2}$. Some of our numerical experiments will be for the latter function, illustrating that the convergence behavior is indeed similar to that for the former.


%
%

\subsection{Two-interval approximation with coarse spectrum}
Our first test concerns the approximation of $F = f_h(A)$, $f_h(\lambda) = \sqrt{\lambda+(h\lambda/2)^2}$, where $A$ is a nonsingular indefinite Hermitian matrix with  relatively large gaps between neighboring eigenvalues. We recall the convergence result \eqref{eq:factR} from the introduction, which states that the geometric convergence factor is governed by the ratios of the spectral subintervals $[a_1,b_1]$ and $[a_2,b_2]$, $a_1<b_1<0<a_2<b_2$. 
%
%

\medskip

\begin{example}
In Figure~\ref{fig:example1} (top left) we show the relative errors 
$\| F\mathbf{u}_0 - r_n(A)\mathbf{u}_0\|_2/$
$\|F\mathbf{u}_0\|_2$
of the type $(n,n-1)$ rational functions   obtained by RKFIT (dashed red curve) and the two-interval Zolotarev approach (dotted blue) for varying degrees $n=1,2,\ldots,25$. Here the matrix $A$ is defined as $A = L/h^2 - k_\infty^2 I\in\mathbb{R}^{N\times N}$, where $N=150$, $h=1/N$,  $k_\infty=15$, and
\begin{equation}\label{eq:matL}
        L = \small \begin{bmatrix}
         1 & -1 \\
        -1 &  2 & -1 \\
           & \ddots & \ddots & \ddots \\
           &        & -1     & 2      & -1 \\
           &        &        & -1     & 1     
        \end{bmatrix}.
\end{equation}
The matrix $L$ corresponds to a scaled FD discretization of the 1D Laplace operator with homogeneous Neumann boundary conditions. 
The spectral subintervals of $A$ are 
$[a_1,b_1] \approx  [-225, -67.2]$ and 
$[a_2,b_2] \approx  [21.5,   8.98\cdot 10^4]$.
The vector $\mathbf{u}_0\in\mathbb{R}^N$ is chosen at random with normally distributed entries. To compute  the RKFIT approximant~$r_n$ we have used another random training vector $\mathbf{v}$ with normally distributed entries. The corresponding errors $\| F\mathbf{v} - r_n(A)\mathbf{v}\|_2/\|F\mathbf{v}\|_2$ together with the number of required RKFIT iterations are also shown in the plot (solid red curve). For all degrees $n$ at most  $5$ RKFIT  iterations  where required until stagnation occurred. Note that the two RKFIT convergence curves (for the vectors $\mathbf{u}_0$ and $\mathbf{v}$) are very close together, indicating that the random choice for the training vector does not affect much the computed RKFIT approximant. Note further that the RKFIT convergence follows the geometric rate predicted by \eqref{eq:factR} (dotted black curve) very closely initially (up to a degree $n\approx 10$), but then the convergence becomes superlinear. This convergence acceleration is due to the spectral adaptation of the RKFIT approximant.

The spectral adaptation is illustrated in the graph on the top right of Figure~\ref{fig:example1}, which plots the error curve $|f_h(\lambda) - r_{10}(\lambda)|$ of the RKFIT approximant  $r_{10}$ (solid red curve) over the spectral interval of $A$, together with the attained values at the eigenvalues of $A$ (red crosses). In particular, close to $\lambda=0$, there are two eigenvalues at which the error curve attains a relatively small value in comparison to the other eigenvalues farther away (meaning that $r_n$ interpolates $f_h$ nearby). These eigenvalues have started to become ``deflated'' by RKFIT, effectively shrinking the spectral subintervals $[a_1,b_1]$ and $[a_2,b_2]$, and thereby leading to the observed superlinear convergence.

In the bottom of Figure~\ref{fig:example1} we show the poles and residues of the RKFIT approximant $r_{10}$ (left) and the associated continued fraction parameters (right), giving rise to the RKFIT-FD grid. All the involved quantities have been computed using the new \texttt{contfrac} method in the Rational Krylov Toolbox.

\begin{figure}[t]
\hspace*{3mm}\begin{minipage}{16cm}
\hspace*{-3mm}\includegraphics[width=6cm]{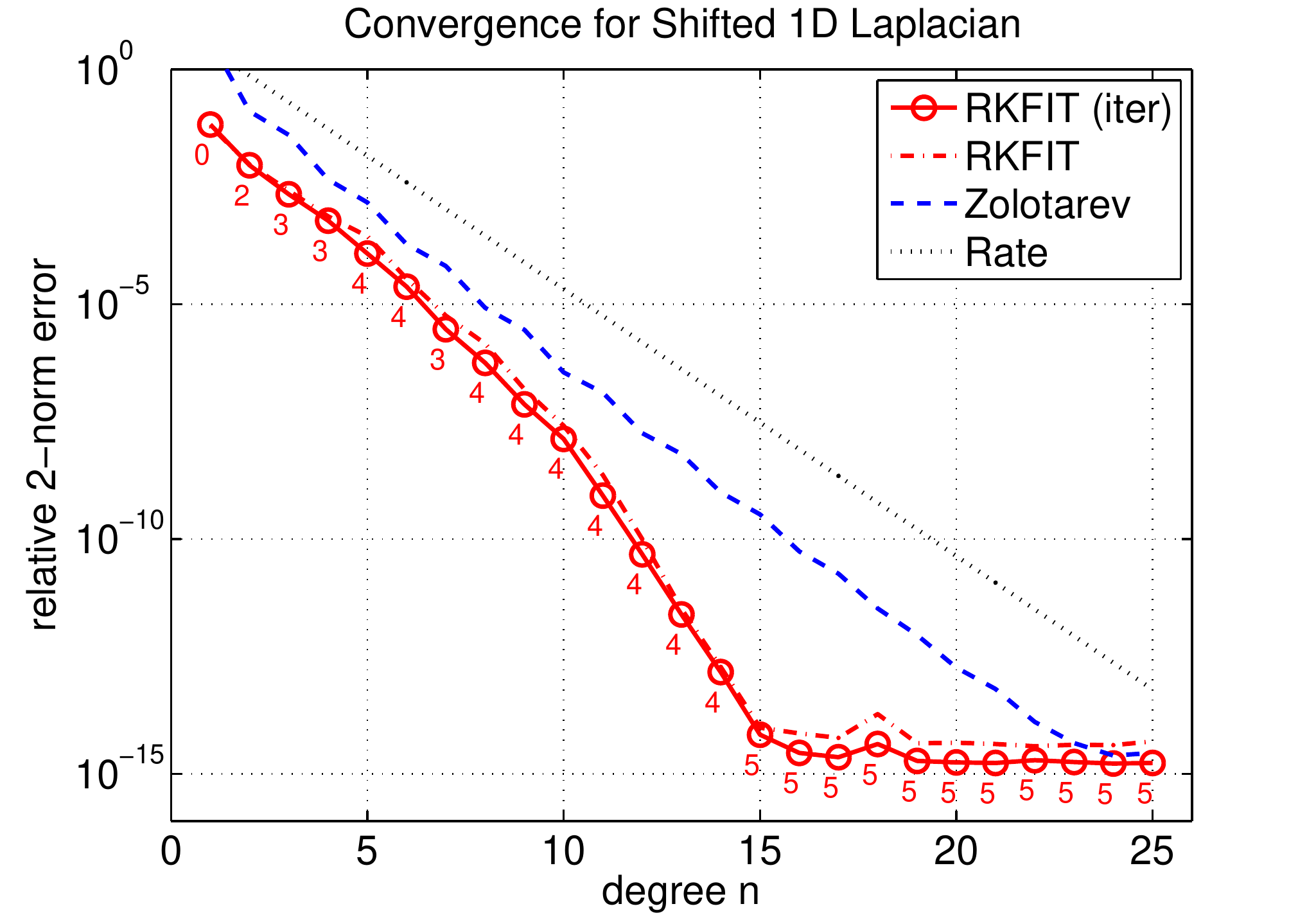}
\hspace*{-4mm}\includegraphics[width=6cm]{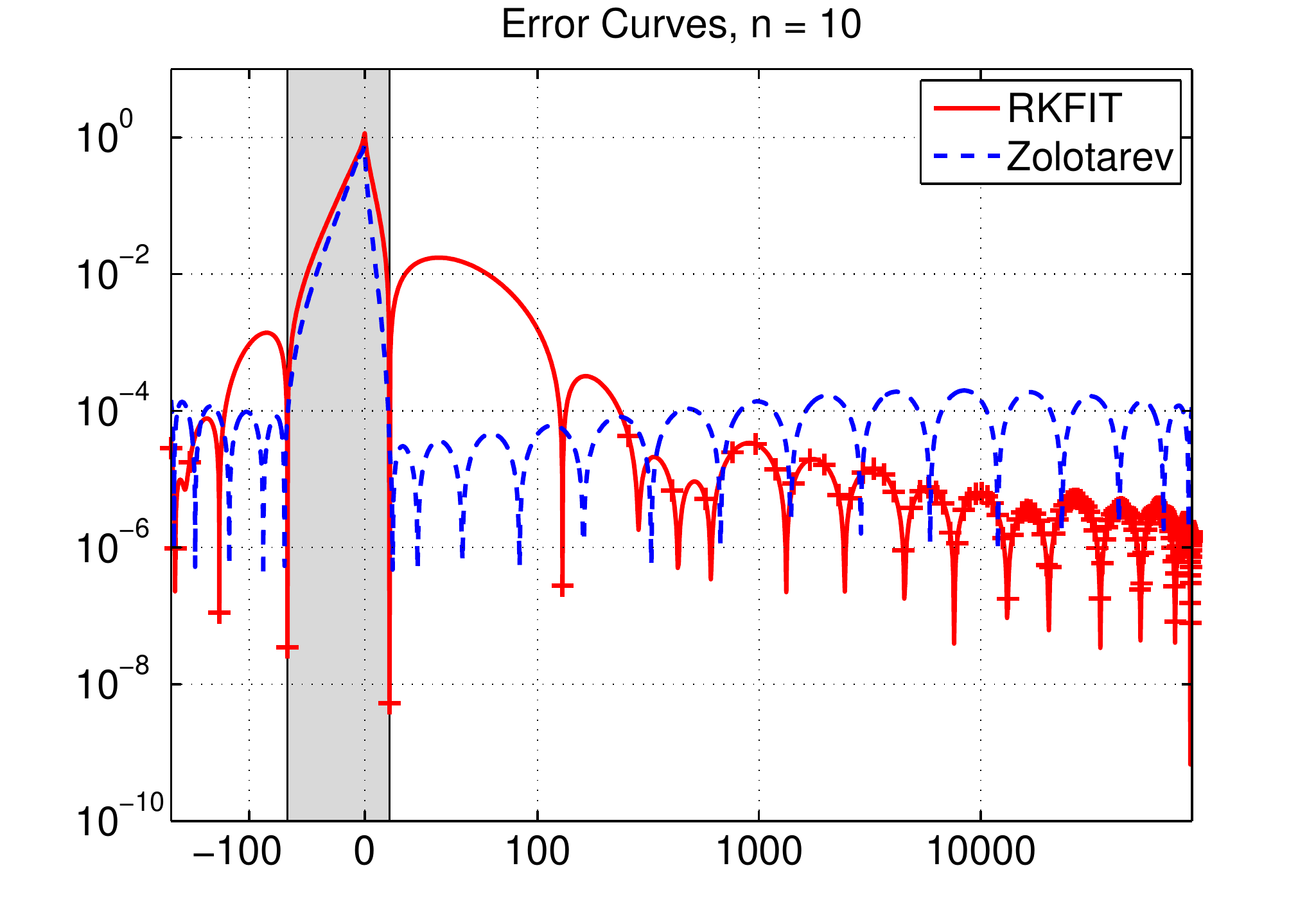}\\[4mm]
\hspace*{-3mm}\includegraphics[width=6cm]{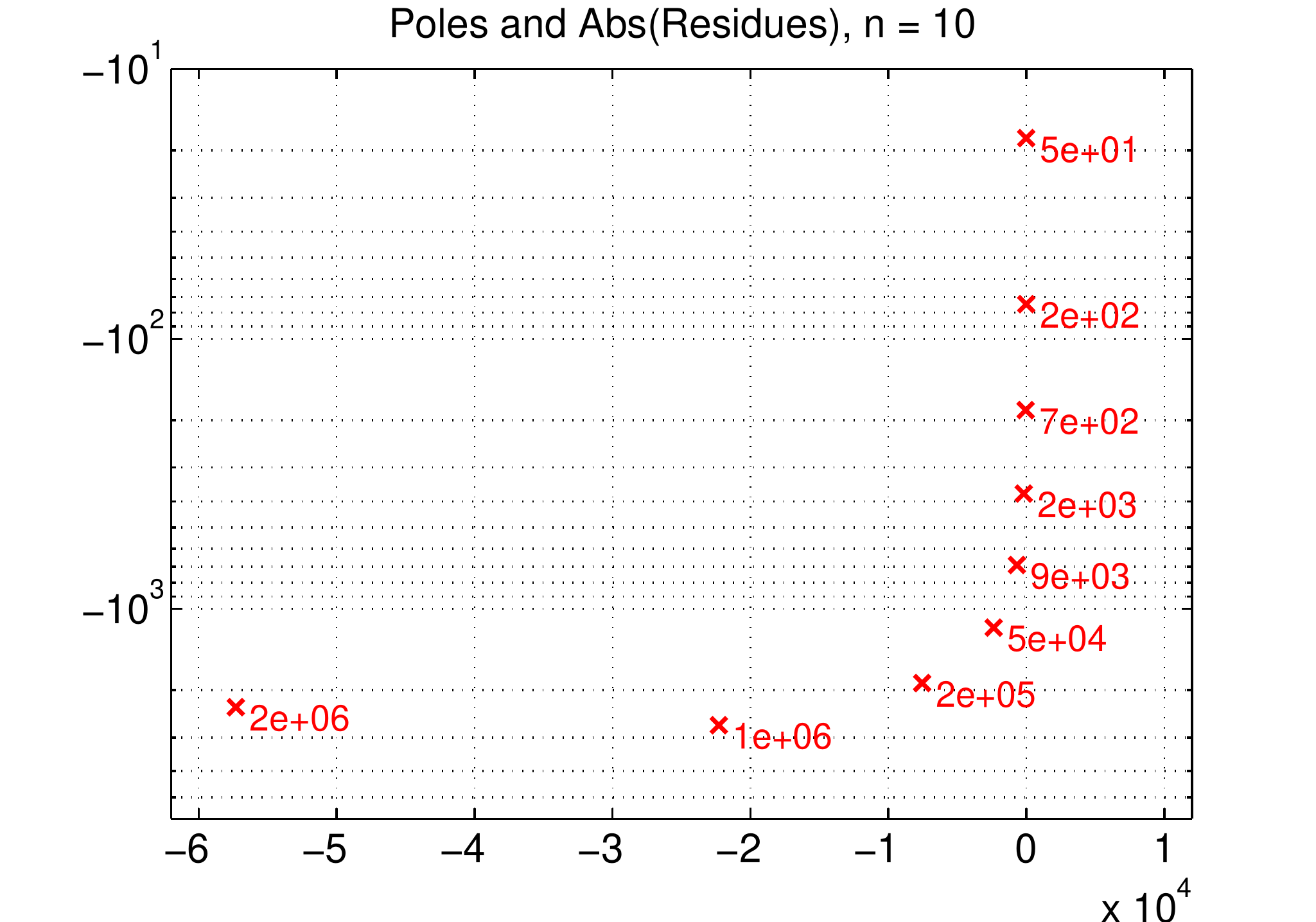}
\hspace*{-4mm}\includegraphics[width=6cm]{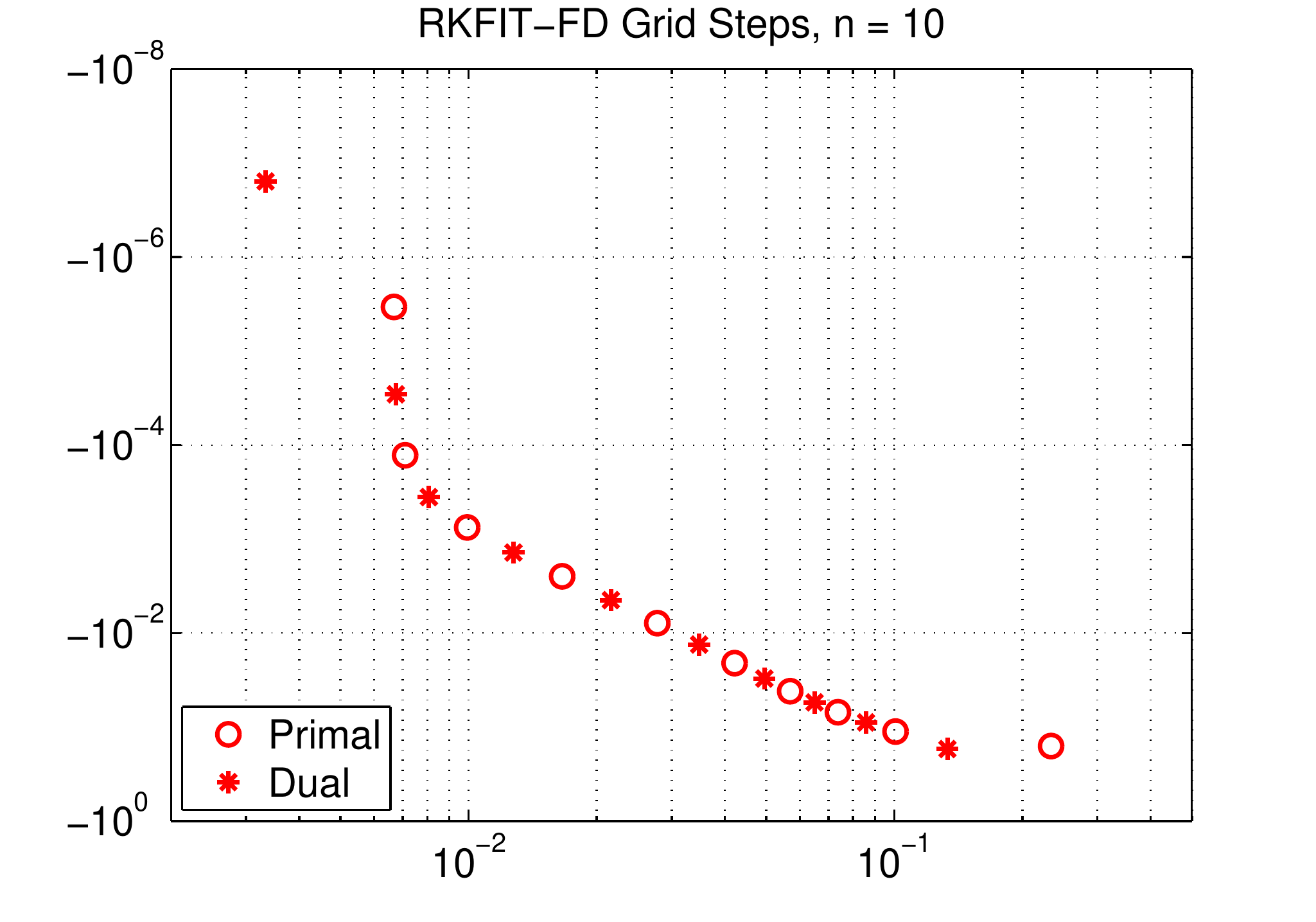} 
\end{minipage}
\begin{minipage}{16cm}
\vspace*{-128.5mm} \hspace*{86mm} {\footnotesize \color{red} $\leftarrow$ spectral adaptation}
\end{minipage}
\caption{Top: Accuracy comparison of RKFIT and Zolotarev approximants for a shifted 1D Laplacian which has a rather coarse spectrum, hence resulting in superlinear RKFIT convergence. The DtN function is $f_h(\lambda)=\sqrt{\lambda + (h\lambda/2)^2}$. The small numbers on the solid red convergence curve on the left indicate the number of required RKFIT iterations. Bottom: The poles and residues of the RKFIT approximant $r_{10}$ (left) and the associated continued fraction parameters (right).}  
\label{fig:example1}
\end{figure}
\end{example}

\medskip

\subsection{Two-interval approximation with dense spectrum}

The superlinear convergence effects observed in the previous example should  disappear when the spectrum of $A$ is dense enough so that, for the order $n$ under consideration, no eigenvalues of $A$ are deflated by interpolation nodes of $r_n$. The next example demonstrates this.

\medskip

\begin{example}\label{ex:example2}
In Figure~\ref{fig:example2} we show the relative errors $\| F\mathbf{u}_0 - r_n(A)\mathbf{u}_0\|_2/\|F\mathbf{u}_0\|_2$ of the type $(n,n-1)$ rational functions obtained by RKFIT and the Zolotarev approach for varying degrees $n=1,2,\ldots,25$. Now  the matrix $A$ corresponds to a shifted 2D Laplacian $A = (L\otimes L)/h^2 - k_\infty^2 I\in\mathbb{R}^{N\times N}$ with $N=150^2$, $h=1/150$,  $k_\infty=15$, and with $L$ defined in \eqref{eq:matL}.
 The special structure of $L$ (and $A$) allows for the use of the 2D discrete cosine transform for computing $F=f_h(A)$. The spectral subintervals of~$A$ are 
$[a_1,b_1] \approx  [-225, -27.7]$ and
$[a_2,b_2] \approx  [21.5,   1.80\cdot 10^5]$.
The vector $\mathbf{u}_0\in\mathbb{R}^{N}$ is chosen at random with normally distributed entries. We also show the relative error of the RKFIT approximant $r_n(A)\mathbf{v}$ with another randomly chosen training vector $\mathbf{v}$, and the number of required RKFIT iterations. As in the previous example there is no big difference in accuracy when evaluating the RKFIT approximant for $\mathbf{u}_0$ or $\mathbf{v}$, however, the number of required RKFIT iterations is slightly higher in this example. 
As the eigenvalues of the matrix $A$ are relatively dense in its spectral interval, we now observe that no spectral adaptation takes place and both the RKFIT and the Zolotarev approximants converge at the rate predicted by \eqref{eq:factR}.

In the bottom of Figure~\ref{fig:example2} we show the grid vectors $\mathbf{u}_j$ satisfying the FD relation~\eqref{eq:Relv} for $n=10$, with the RKFIT-FD grid parameters $h_j$ and $\widehat h_{j-1}$ ($j=0,1,\ldots,10)$ extracted from~$r_{10}$. The entries of $\mathbf{u}_j$ are complex-valued, hence we show the $\log_{10}$ of the amplitude and phase separately. Note how the amplitude decays very quickly as the random signal travels further to the right in the grid, illustrating the good absorption property of this grid.

\begin{figure}[t]
\hspace*{3mm}\begin{minipage}{16cm}
\hspace*{-3mm}\includegraphics[width=6cm]{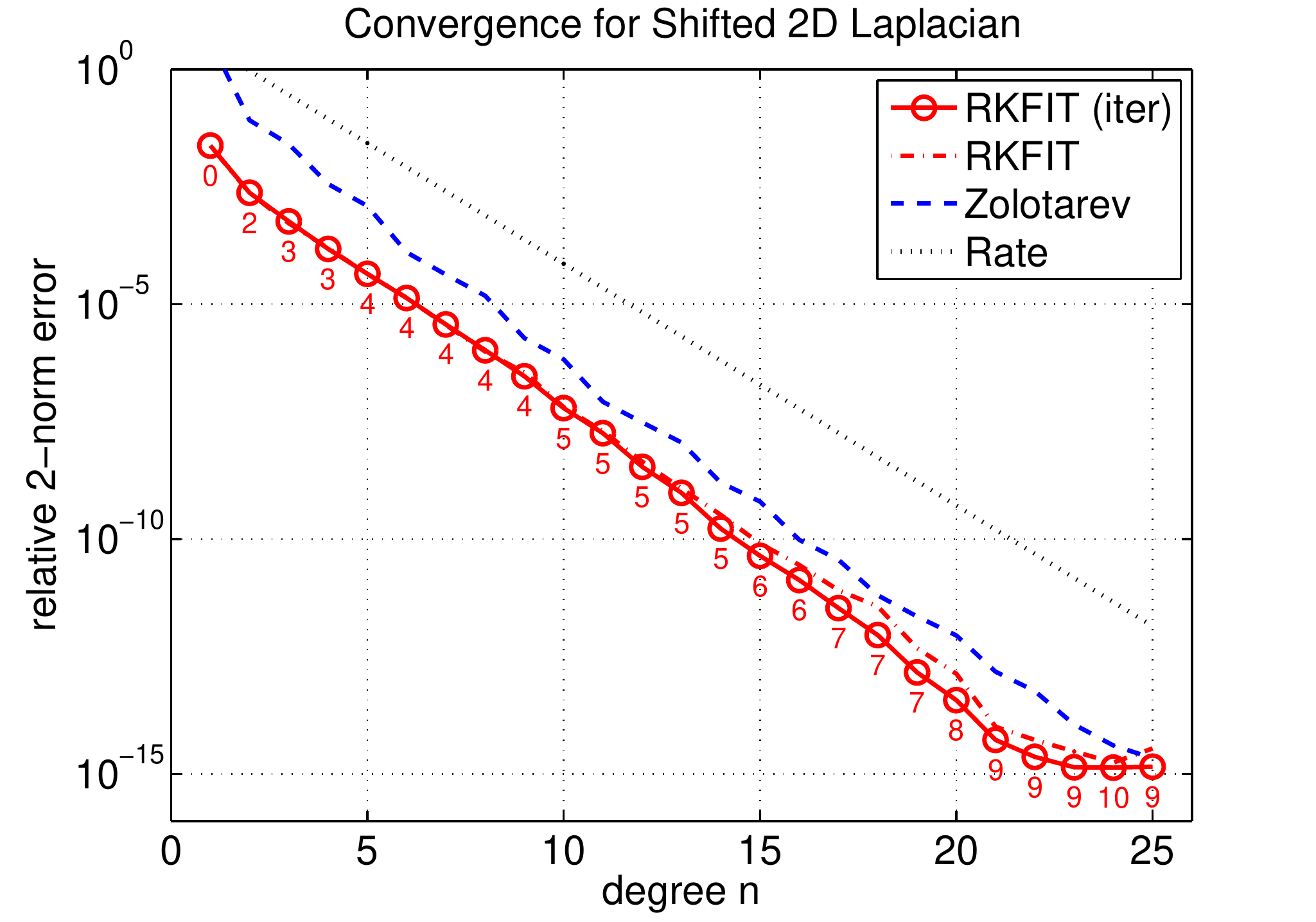}
\hspace*{-4mm}\includegraphics[width=6cm]{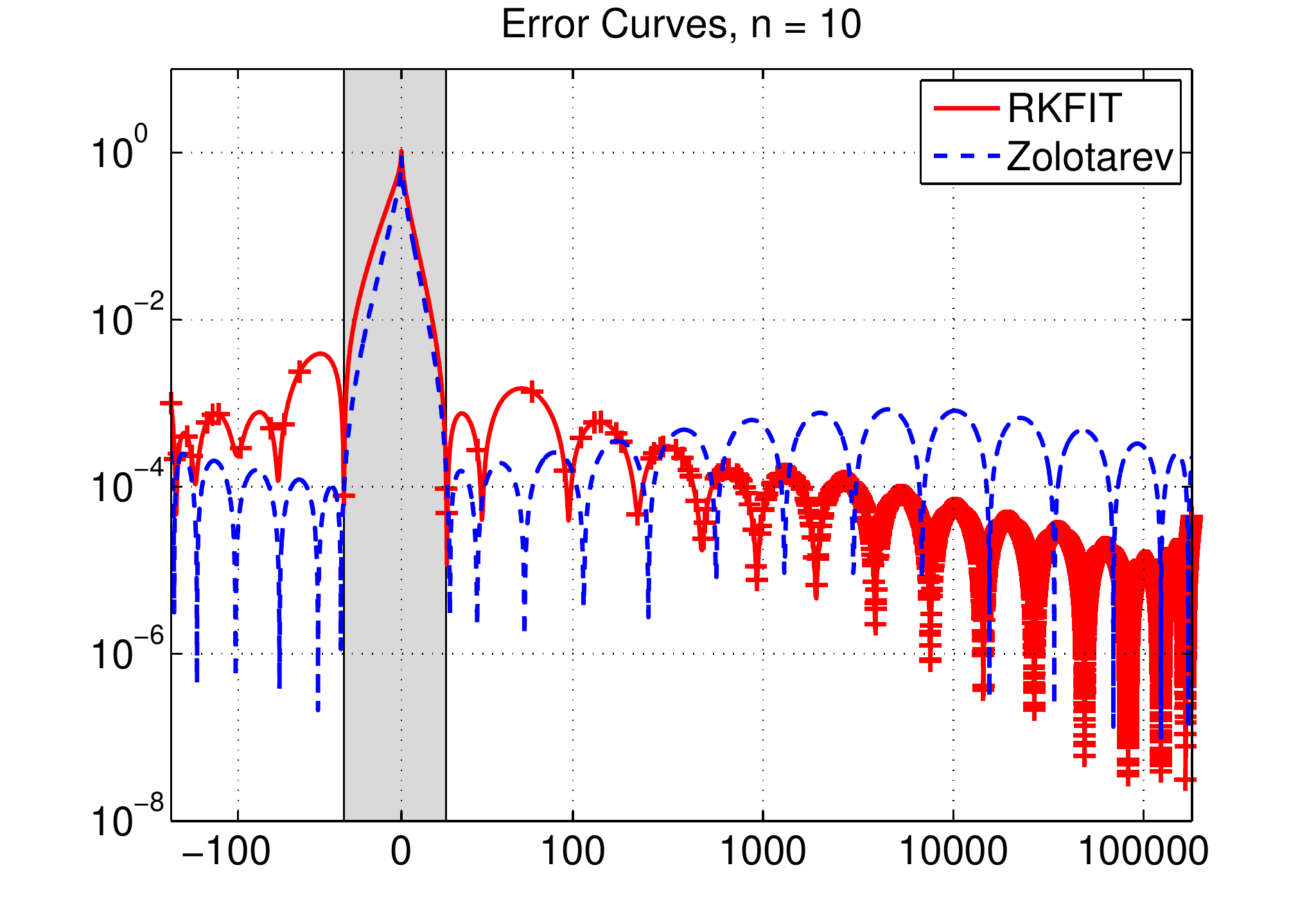}\\[4mm]
\hspace*{-3mm}\includegraphics[width=6.3cm]{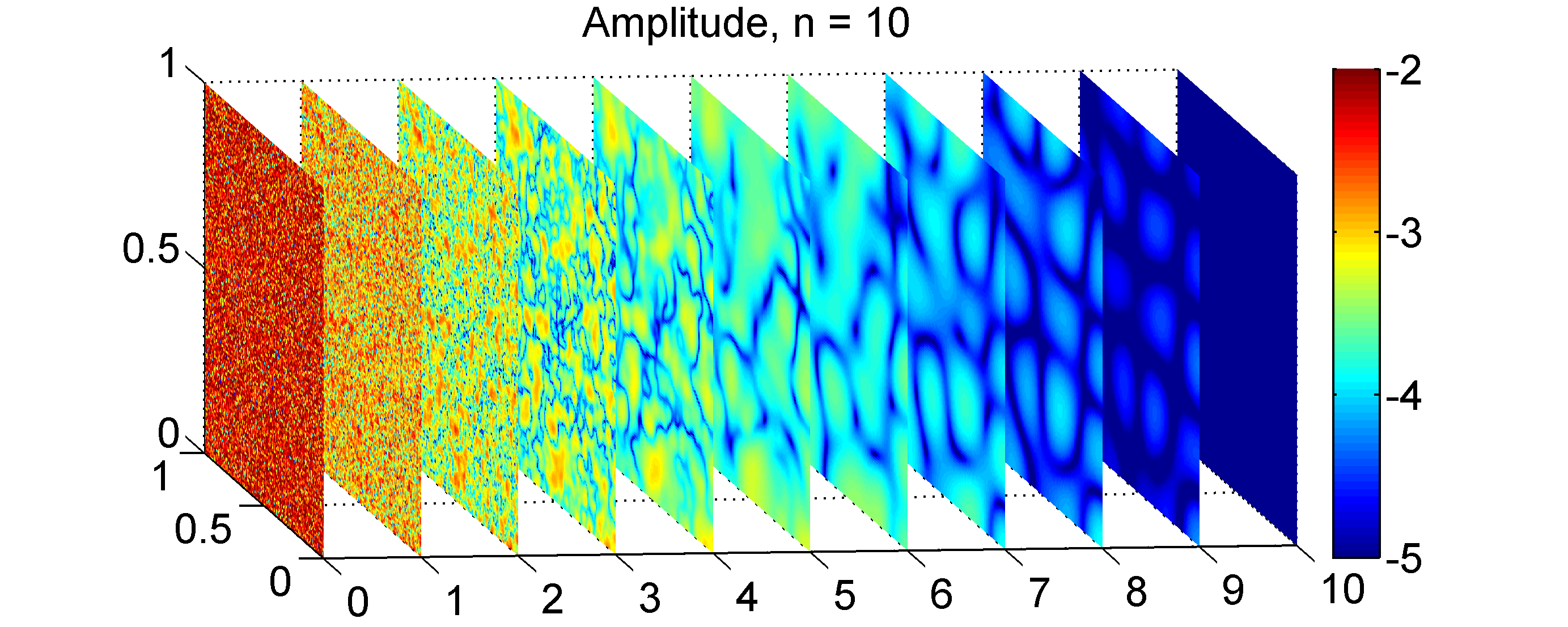}
\hspace*{-7mm}\includegraphics[width=6.3cm]{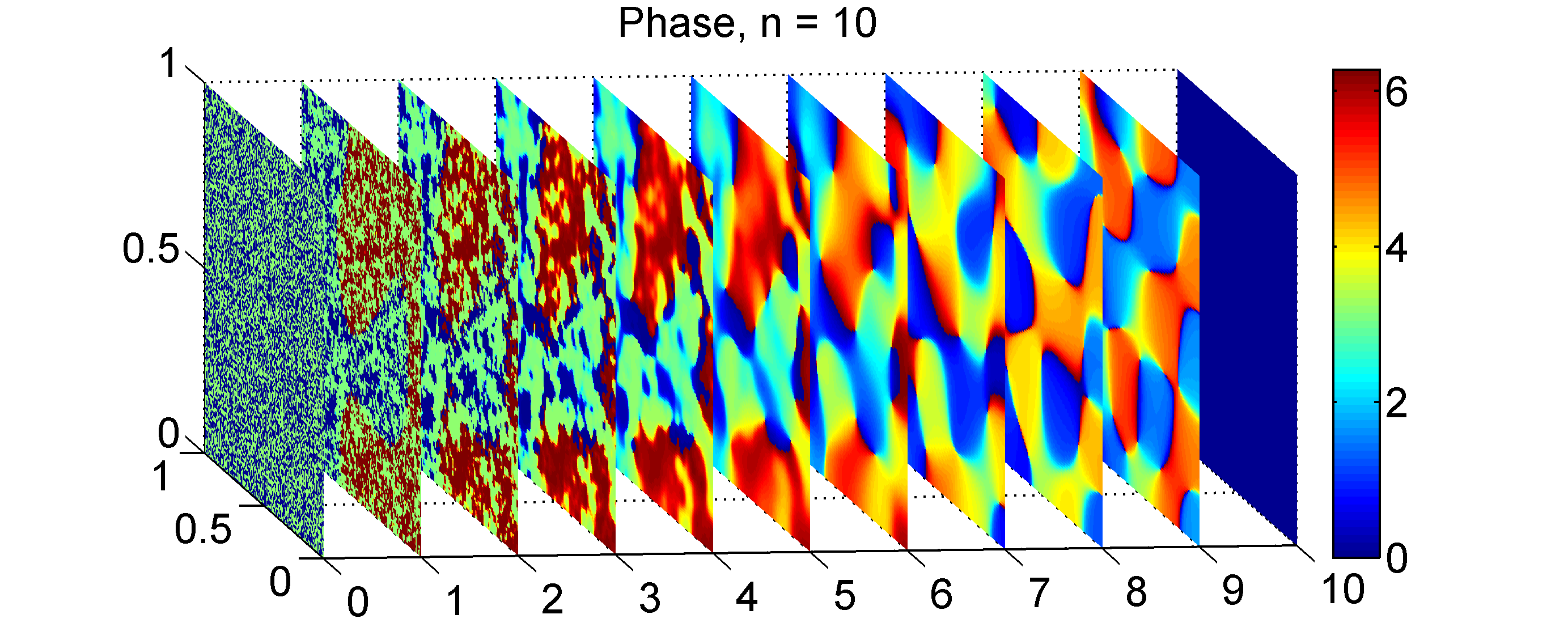}
\end{minipage}
\caption{Top: Comparison of RKFIT and Zolotarev approximants for a shifted 2D Laplacian. Bottom: The $\log_{10}$ of the amplitude and phase of the grid vectors~$\mathbf{u}_j$ $(j=0,1,\ldots,n=10)$. Qualitatively, the poles and residues and the complex grid steps for the associated RKFIT approximant $r_{10}$ look  similar to those in Figure~\ref{fig:example1} and are therefore omitted.}  
\label{fig:example2}
\end{figure}
\end{example}

\subsection{Approximation on an indefinite interval}

In order to \rev{remove the spectral gap $[b_1,a_2]$} from which the previous two examples benefited, we now consider the approximation on an indefinite interval. 

\begin{example}
We approximate $f(\lambda) = \sqrt{\lambda}$ on the indefinite interval 
$[a_1,b_2] = [-225,$ $1.80\cdot 10^5]$.
Note that  $[a_1,b_2]$ is  the same as in the previous Example~\ref{ex:example2}, but without the spectral gap about zero. This problem is of interest as, in the variable-coefficient case, one cannot easily exploit a spectral gap between the eigenvalues of $A$ which are closest to zero. This is because a varying coefficient $c(x)$ can be thought of as a variable shift of the eigenvalues of $A$; hence an eigenvalue-free interval $[b_1,a_2]$ may not always exist. 

To mimic continuous approximation on an interval, we use for $A$ a surrogate diagonal matrix of size $N=200$ having $100$ logspaced eigenvalues in $[a_1,-10^{-16}]$ and $[10^{-16},b_2]$, respectively. The training vector $\mathbf{v}$ is chosen as $[1,1,\ldots,1]^T$. We run RKFIT for degrees $n=1,2,\ldots,25$. 
The relative error of the RKFIT approximants $\| F\mathbf{v} - r_n(A)\mathbf{v}\|_2/\|F\mathbf{v}\|_2$ seems to reduce like $\exp(-\pi\sqrt{n})$; see Figure~\ref{fig:example3} (left). 

We also compare RKFIT to a two-interval Remez-type approximant obtained by using the interpolation nodes of numerically computed best approximants to $\sqrt{\lambda}$ on $[0,1]$, scaling them appropriately, and unifying them for the intervals $[a_1,0]$ and $[0,b_2]$. The number of interpolation nodes on both intervals is balanced so that the resulting error curve is closest to being equioscillatory on the whole of $[a_1,b_2]$. Again the error of the so-obtained Remez-type approximant seems to reduce like $\exp(-\pi\sqrt{n})$.

\begin{remark}
The uniform rational approximation of $\sqrt{\lambda}$ on a semi-definite interval $[0,b_2]$ has been studied by Newman and Vjacheslavov. It is known that the error reduces like $\exp(-\pi\sqrt{2n})$ with the degree $n$; see \cite[Section~4]{PP87}. 
Based on the observations in Figure~\ref{fig:example3} we conjecture that the error of the best rational approximant to $\sqrt{\lambda}$ on an indefinite interval $[a_1,b_2]$ reduces like $\exp(-\pi\sqrt{n})$. 
\end{remark}

\begin{figure}[t]
\hspace*{3mm}\begin{minipage}{16cm}
\hspace*{-3mm}\includegraphics[width=6cm]{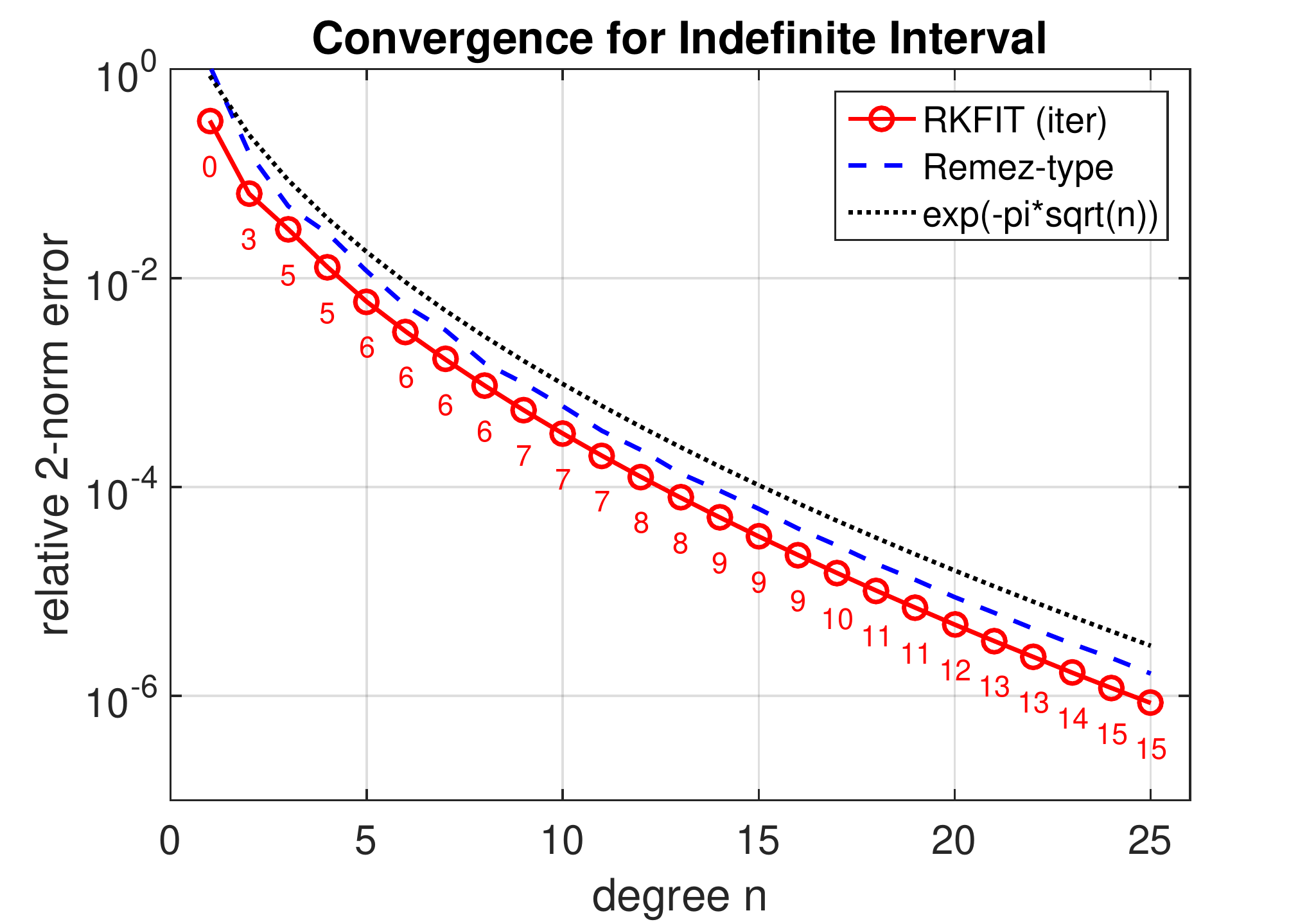}
\hspace*{-4mm}\includegraphics[width=6cm]{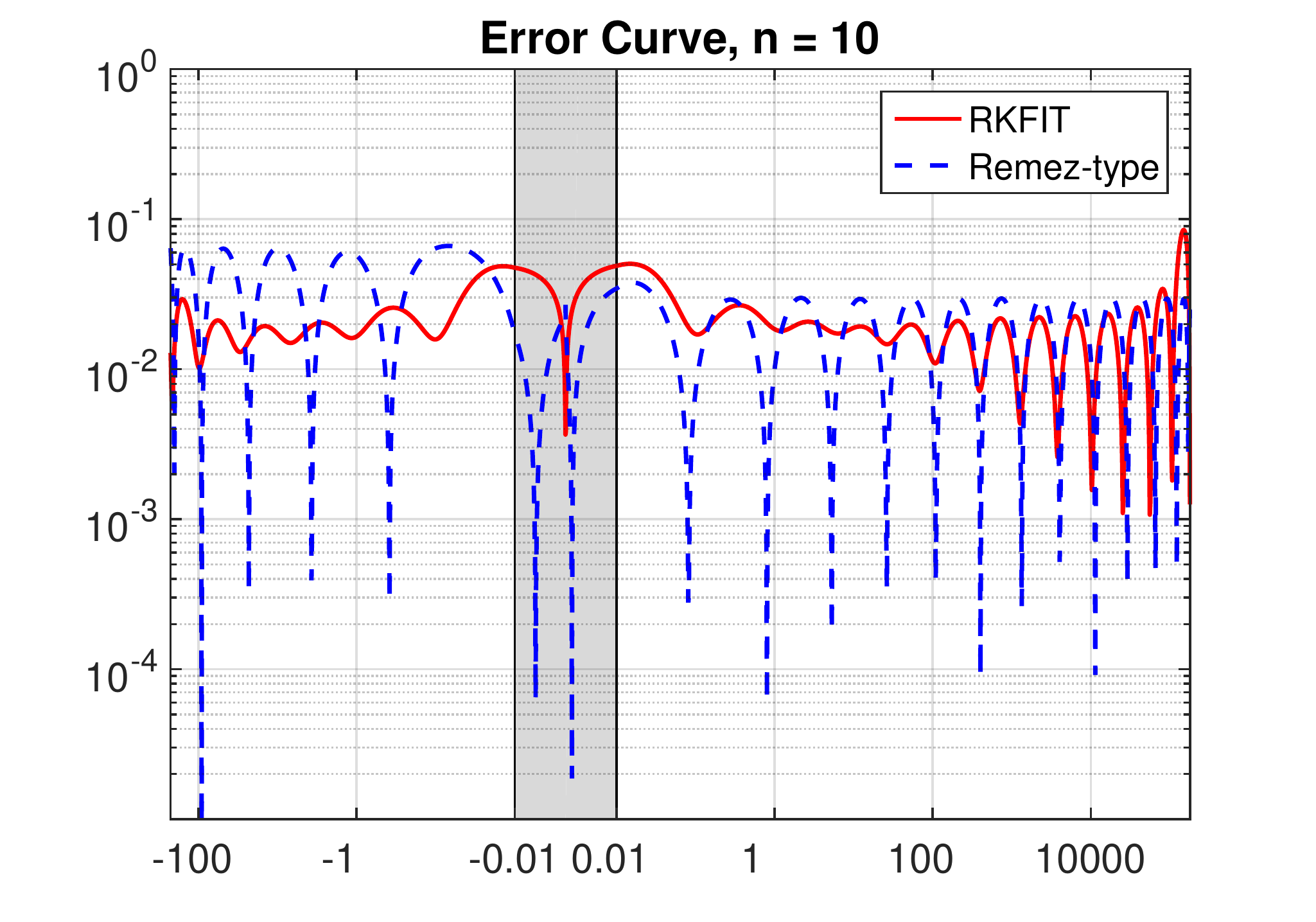}
\end{minipage}
\caption{RKFIT  approximation of $f(\lambda) = \sqrt{\lambda}$  on an indefinite interval $[a_1,b_2]$, $a_1<0<b_2$, compared to a two-interval Remez-type approximant.  Qualitatively, the poles/residues and the complex grid steps associated with $r_{10}$ look similar to those in Figure~\ref{fig:example1} and are therefore omitted.}  
\label{fig:example3}
\end{figure}
\end{example}

\section{Numerical tests: variable-coefficient case}\label{sec:conv2}

We now consider  a variable-coefficient function~$c$ motivated by a  geophysical seismic exploration setup as shown in Figure~\ref{fig:drawing}. Here a pressure wave signal of a single frequency is emitted by an acoustic transmitter in the Earth's subsurface, travels through the underground, and is then logged by receivers on the surface. From these measurements geophysicists try to infer variations in the wave speed   to draw conclusions about the subsurface composition. The computational domain of interest is a three-dimensional portion of the Earth and  we might have knowledge about the sediment layers below this domain, i.e., for $x\geq 0$ in Figure~\ref{fig:drawing}. While the acoustic waves in $x\geq 0$  may not be of interest on their own, the layers might cause wave reflections back into the computational domain and hence need be part of the model. 


\smallskip

\begin{figure}
\begin{minipage}{16cm}
\hspace*{30mm}\includegraphics[width=6cm]{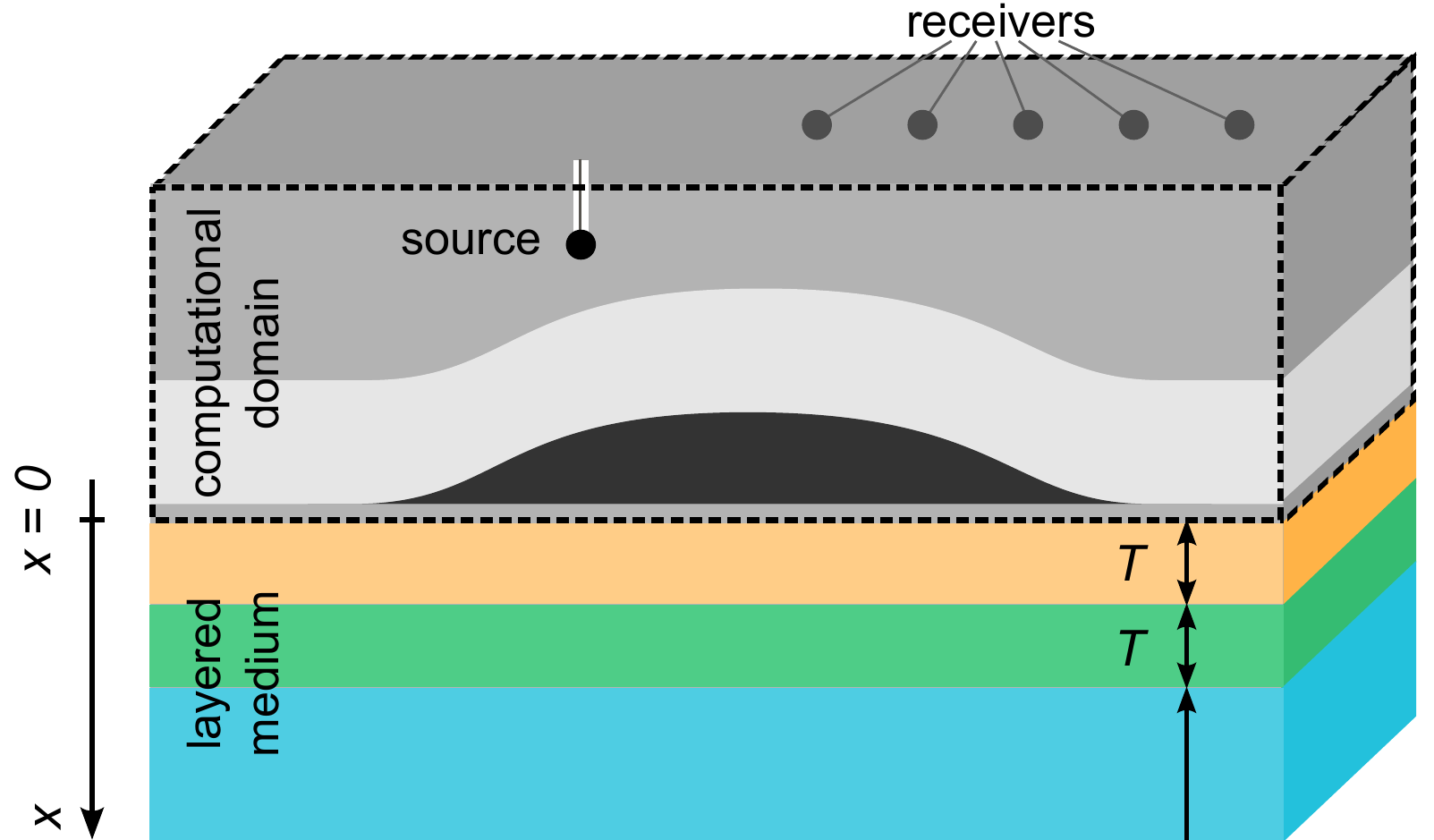}
\vspace*{-0mm}
\end{minipage}
\caption{Typical setup of a seismic exploration of the Earth's subsurface. It is of practical interest to compress the layered medium in $x\geq 0$ into a single PML with a small number of grid points.}  
\label{fig:drawing}
\end{figure}

\begin{figure}[t]
\hspace*{3mm}\begin{minipage}{16cm}
\hspace*{-13mm}\includegraphics[width=13.3cm]{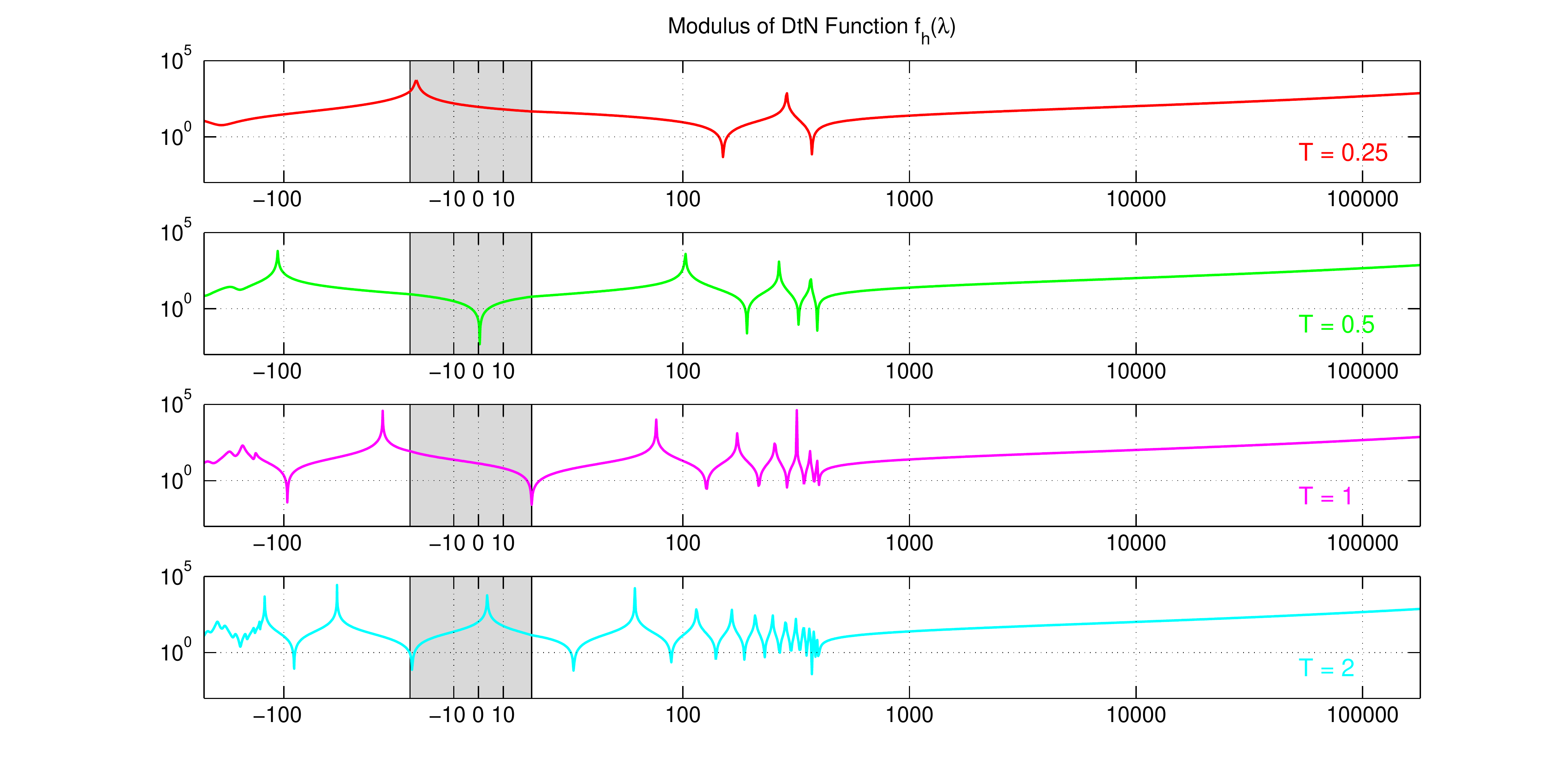}\\
\hspace*{-3mm}\includegraphics[width=6cm]{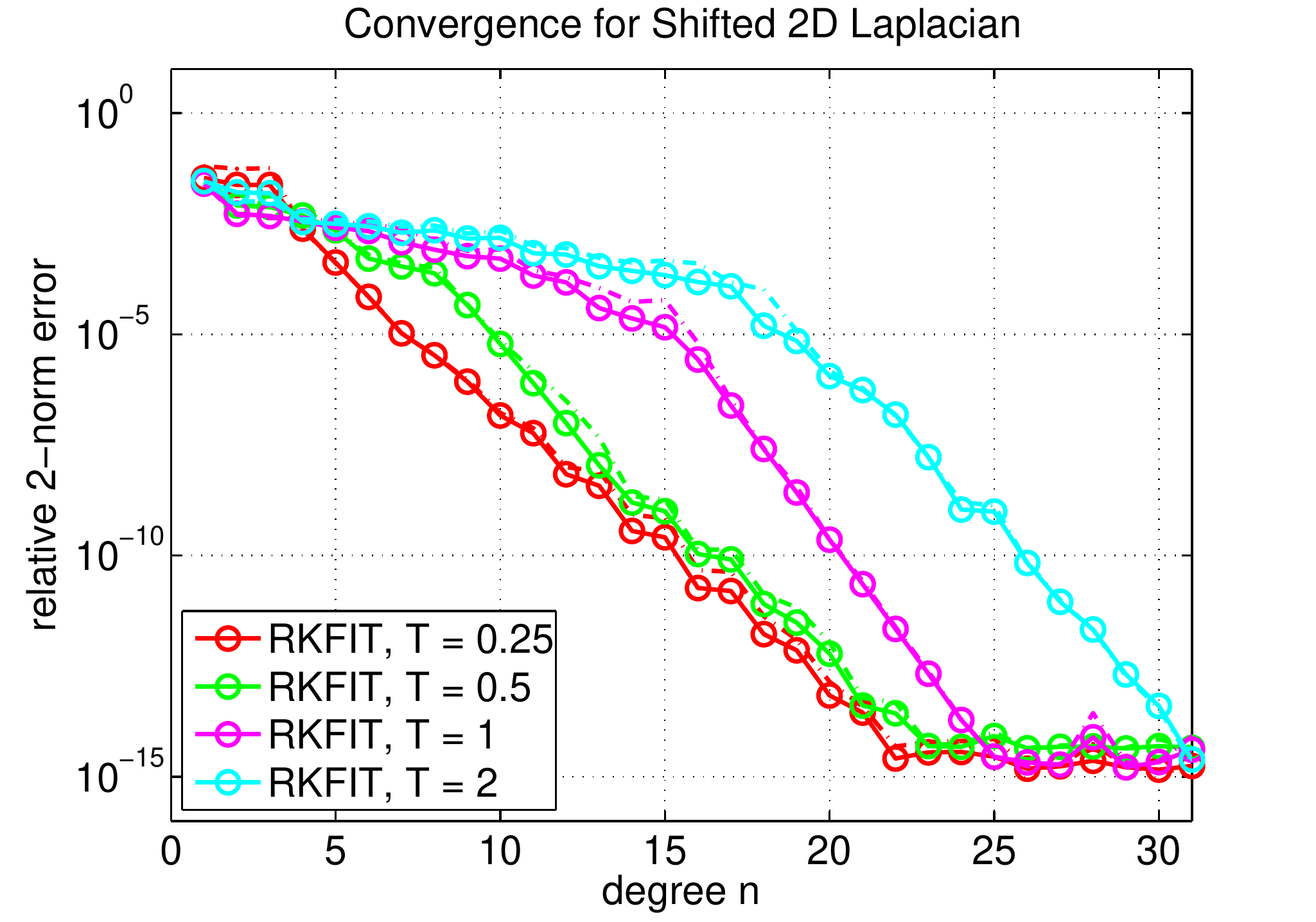}
\hspace*{-3mm}\includegraphics[width=6cm]{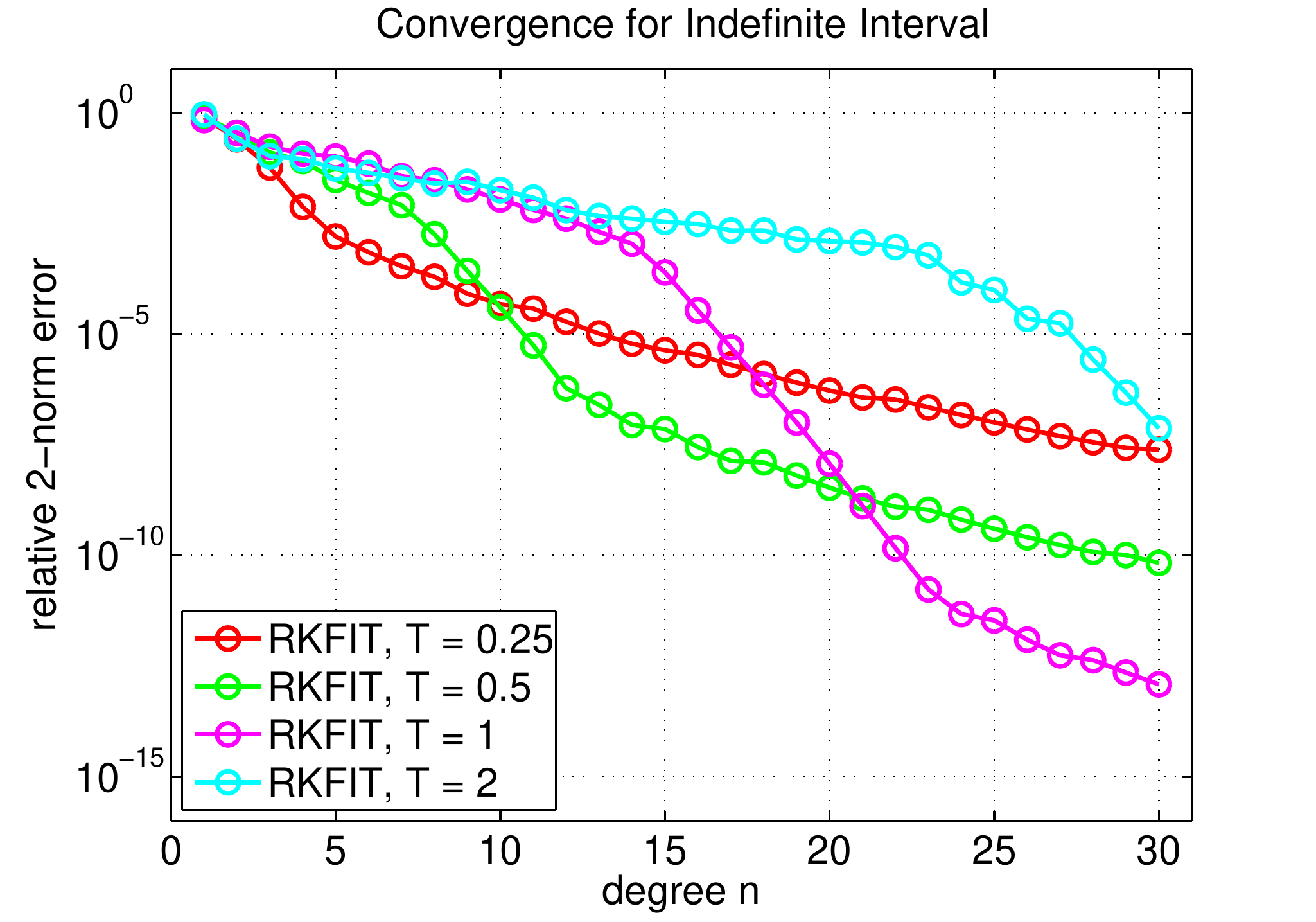}
\end{minipage}
\caption{Top: The four panels show the modulus of the discrete variable-coefficient DtN function $f_h$ for varying thickness $T$ of the two finite layers. 
Bottom: The two plots show the RKFIT convergence for approximating $f_h(A)\mathbf v$ when $A$ is a shifted 2D Laplacian (left) and a diagonal matrix with dense eigenvalues in the same spectral interval (right), respectively.}  
\label{fig:example45}
\end{figure}

\begin{example}\label{ex:vc1}
At the $x=0$ interface of the computational domain, shown in Figure~\ref{fig:drawing}, we assume to have a 2D Laplacian $A = (L\otimes L)/h^2 - k_\infty^2 I$ with $L$ defined in \eqref{eq:matL}, and $N=150^2$, $h=150$, and $k_\infty = 15$. Now the function $f_h$ of interest is \eqref{eq:vcdtn}, with the coefficients $c_j$ obtained by discretizing the piecewise-constant coefficient function $c$ 
which equals $-400$ on $[0,T)$, $+125$ on $[T,2T)$ and $0$ on $[2T,\infty)$. 
\rev{The} thickness of the two finite layers $T$  is varied in $\{0.25,0.5,1,2\}$. For each thickness $T$, the four panels in the top of Figure~\ref{fig:example45} show the modulus of $f_h$ over the spectral subintervals $[a_1,b_1]$ and $[a_2,b_2]$ of $A$, glued together with the gray linear region $[b_1,a_2]$. It becomes apparent that with increasing $T$ the function $f_h$ exhibits more poles on or nearby the spectral interval of $A$, indicated by the upward spikes. 

The convergence of the RKFIT approximants for increasing degree~$n$ is shown in Figure~\ref{fig:example45} on the bottom left. For each thickness $T$ there are two curves very nearby: a solid curve showing the relative 2-norm approximation error for $F\mathbf{v}$ (where $\mathbf{v}$ is a random training vector) and a dashed curve for $F\mathbf{u}_0$ (where $\mathbf{u}_0$ is another random testing vector).  We observe that RKFIT converges very robustly for this piecewise constant-coefficient problem. Similar behavior has been observed in many numerical tests with other offset functions $c$. We refer to the example collection of the Rational Krylov Toolbox which contains further examples. The codes for producing our examples are available online and can easily  be modified  to other coefficient functions.
\end{example}

\begin{example}\label{ex:vc2}
Here we consider a diagonal matrix $A$ with the same indefinite spectral interval as the matrix in the previous example but with dense eigenvalues, namely  $100$ logspaced eigenvalues in $[a_1,-10^{-16}]$ and $[10^{-16},b_2]$, respectively. The convergence is shown on the bottom right of 
Figure~\ref{fig:example45}. Again the RKFIT behavior is very robust even for high approximation degrees $n$, but compared to the above Example~\ref{ex:vc1} the convergence is  delayed, indicating that spectral adaptation has been prevented here. 
\end{example}


\section{Discussion and conclusions}\label{sec:disc}
An obvious alternative to our grid compression approach in the two examples of section~\ref{sec:conv2} would be to use an efficient discretization method on $c$'s support, and then to append it to the constant-coefficient PML of \cite{DGK16}. In principle such an approach requires at least the integer part of 
$
 \text{N}={\pi}^{-1} \int_{0}^H {\sqrt{k_\infty^2-c(x)}} \d x
$ 
discretization points according to the Nyquist sampling rate, where $H$ is the total thickness of $c$'s support.
 In fact, the classical spectral element method  (SEM) with polynomial local basis requires at least $\frac{\pi}{2}\text{N}$ grid points \cite{ainsworth2009dispersive}. (The downside of SEM compared to our FD approach is its high linear solver cost per unknown caused by the dense structure of the resulting linear systems.) The following table shows the minimal number of grid points required for discretizing the two finite layers in the examples of section~\ref{sec:conv2}, depending on the layer thickness $T$, as well as the number of RKFIT-FD grid points to achieve a relative accuracy of $10^{-5}$ for the same problem:
 
 \medskip

\begin{small}
 \hspace*{15mm}\begin{tabular}{|l|c|c|c|c|}
 \hline 
    & $T=0.25$ & $T=0.5$ & $T=1$ & $T=2$ \\ 
 \hline 
Nyquist minimum $\text{N}$ & 8.75 & 17.5 & 35 & 70 \\ 
 \hline 
SEM minium $\frac{\pi}{2}\text{N}$ & 13.7 & 27.5 & 55.0 & 110.0 \\ 
 \hline 
RKFIT-FD (Example~\ref{ex:vc1}) & 8 & 10 & 16 & 19 \\ 
 \hline 
RKFIT-FD (Example~\ref{ex:vc2}) & 14 & 11 & 17 & 28 \\ 
 \hline 
 \end{tabular}  
\end{small}

\bigskip

\noindent Although we also observe with RKFIT-FD a tendency that the DtN functions become more difficult to approximate when the layer thickness increases (an increase of the coefficient jumps between the layers will have a similar effect), the  number of required grid points can be significantly smaller than the Nyquist limit $\text{N}$. A possible explanation for this phenomenon is RKFIT's ability to adapt to the spectrum of~$A$, not being slowed down in convergence by singularities of the DtN function well separated from the eigenvalues of $A$. In the appendix we analyze this phenomenon.


\medskip

\noindent \textbf{Acknowledgements.}  \rev{We thank Ralf Hiptmair and the anonymous referees for constructive comments that have significantly improved the presentation.}
 Druskin was partially supported by an  
Air Force Office of Scientific Research (AFOSR) grant FA~955020-1-0079 and a National Science Foundation (NSF) grant DMS-2110773. 
 G\"{u}ttel was partially supported by The Alan Turing Institute, Engineering and Physical Sciences Research Council (EPSRC) grant EP/W001381/1. 
\rev{Knizhnerman was supported by the Moscow Center of Fundamental and Applied Mathematics (Agreement 075-15-2019-1624 with the Ministry of Education and Science of the Russian Federation).}


\vspace*{-4mm}

\appendix
\section{Nyquist limit-type criterion for rational approximation}\label{sec:nyquist}
The top-four panels in Figure~\ref{fig:example45} suggest that the DtN function $f_h$, specified in \eqref{eq:vcdtn}, develops more and more poles on the real axis as the thickness of the finite layers increases. {These poles are also known as scattering resonances. 
In order to analyze this behavior, we consider a two-layer waveguide problem with piecewise constant wave numbers similar to the one in Figure~\ref{fig:waveguide}, but now in the continuous setting without any FD approximation. 
This problem is governed by the equations
\begin{eqnarray*}
u''(x) = (\lambda+c) u(x)\ \ \text{for}\  \ x\in [0,T), \qquad
u''(x) = \lambda u(x)\ \ \text{for}  \ \ x\in [T,\infty),
\end{eqnarray*}
with given $u(0)=u_0$ and the decay condition $u(x)\to 0$ as $x\to \infty$. Here, $T$ is the thickness of the first layer with an offset coefficient $c$. In terms of the Helmholtz equation, a value $c=-k_0^2<0$ means that the wave number on the first layer is larger than on the second, whereas $c> 0$ means that the wave number on the first layer is smaller than on the second. If $c=0$ we have a homogeneous infinite waveguide. 

Our aim is to solve for $u$ explicitly and to determine a formula for the DtN function $f$ satisfying $f(\lambda)u_0 = -u'(0)$. For $x\in [0,T]$ we have 
\begin{eqnarray*} 
u(x) = \alpha e^{x \sqrt{\lambda+c}} + (u_0-\alpha) e^{-x \sqrt{\lambda+c}}
2\alpha \sinh\big( x \sqrt{\lambda+c}\big) + e^{-x \sqrt{\lambda+c}} u_0,
\end{eqnarray*}
where the square roots are understood as the analytical continuation through the upper half plane from the axis $\lambda > -c$. 
For $x\in [T,\infty)$ we require a decaying solution, hence 
$u(x) = \beta e^{-x \sqrt{\lambda}}$ there.   
By continuity of $u(x)$ at $x=T$  we have 
\[
        \beta  =   \big( 2\alpha  \sinh\big(T\sqrt{\lambda+c}\big) +  e^{-T\sqrt{\lambda+c}}u_0\big)\cdot e^{T\sqrt{\lambda}}.
\]
By continuity of $u'(x)$ at $x=T$ we further require 
\[
        \sqrt{\lambda+c}\cdot\big( 2\alpha \cosh(T\sqrt{\lambda+c})  -  e^{-T\sqrt{\lambda+c}} u_0\big)
        =  - \beta \sqrt{\lambda}\cdot e^{-T\sqrt{\lambda}},
\]
hence 
\[
        \sqrt{\lambda+c}\cdot\big( 2\alpha \cosh(T\sqrt{\lambda+c})  -  e^{-T\sqrt{\lambda+c}} u_0\big)
        =  -  \big( 2\alpha  \sinh\big(T\sqrt{\lambda+c}\big) +  e^{-T\sqrt{\lambda+c}}u_0\big)\cdot \sqrt{\lambda},
\]
from which $\alpha$ can be determined as
\[
         \alpha
        = \frac{u_0}{2} \cdot \frac{\big( \sqrt{\lambda+c} - \sqrt{\lambda}  \big)e^{-T\sqrt{\lambda+c}}}{\sqrt{\lambda+c} \cosh(T\sqrt{\lambda+c})  + \sqrt{\lambda} \sinh\big(T\sqrt{\lambda+c}\big)}. 
\]
Note that $\alpha = \alpha_\lambda$ is a function of $\lambda$. Using the fact that $u'(0) = (2\alpha_\lambda - u_0) \sqrt{\lambda + c}$, the DtN function $f$ satisfying $f(\lambda) u_0 = -u'(0)$ is given as
\begin{equation}\label{eq:flam}
        f(\lambda) 
        =  \frac{\sqrt{\lambda+c}\cdot \sinh(T\sqrt{\lambda+c})  + \sqrt{\lambda}\cdot \cosh\big(T\sqrt{\lambda+c}\big)}{\sqrt{\lambda+c}\cdot \cosh(T\sqrt{\lambda+c})  + \sqrt{\lambda}\cdot \sinh\big(T\sqrt{\lambda+c}\big)} \cdot \sqrt{\lambda+c}.
\end{equation}
A plot of this function  for two different parameter choices $T=5$ and $c = \pm  9$ is shown in Figure~\ref{fig:cntdtn}. We observe that this function is smooth over the whole real axis when $c\geq 0$, while it develops singularities when  $c<0$. The following lemma shows that the number of real poles is proportional to $c$ and $T$.

\begin{figure}[t]
\hspace*{3mm}\hspace*{-4mm}\includegraphics[width=6cm]{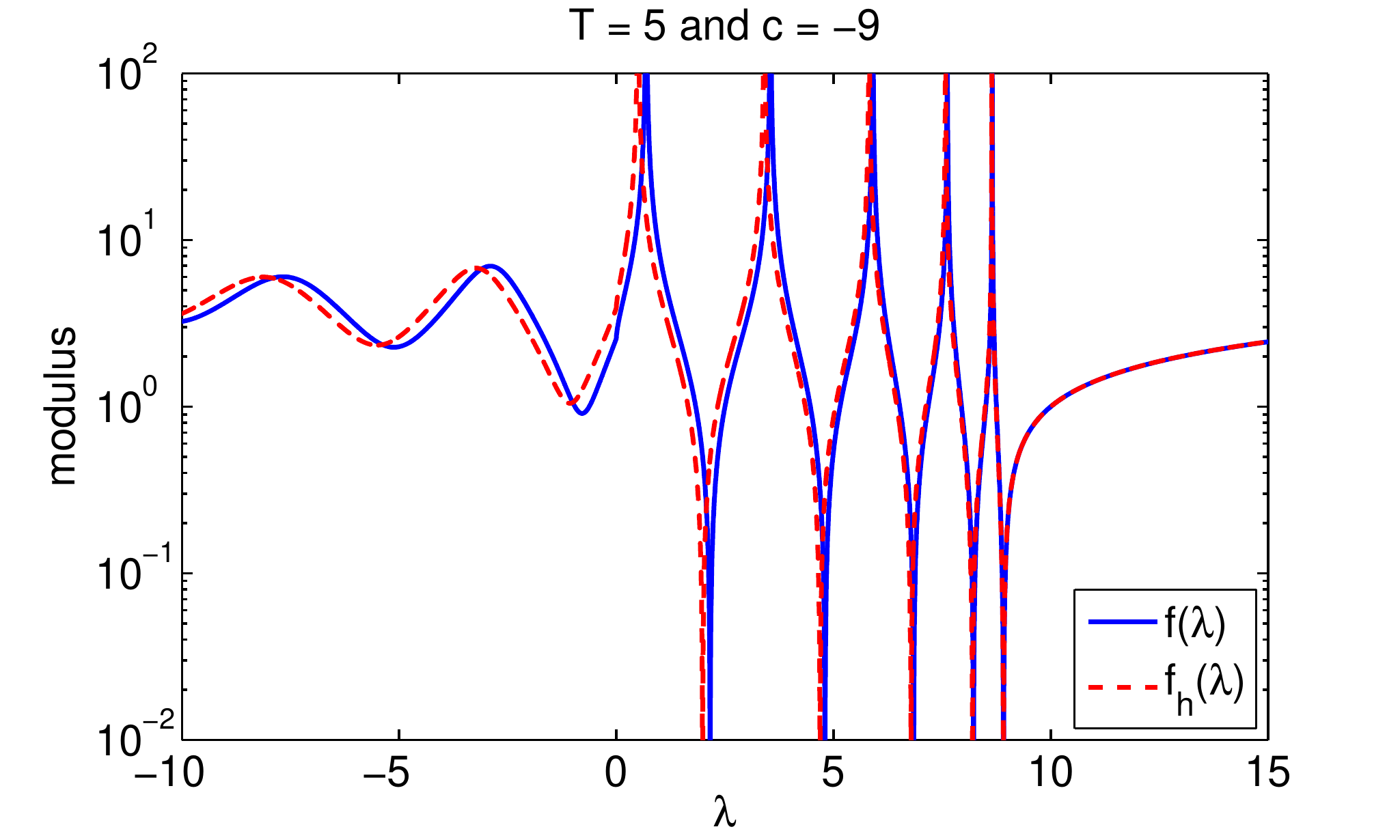}\includegraphics[width=6cm]{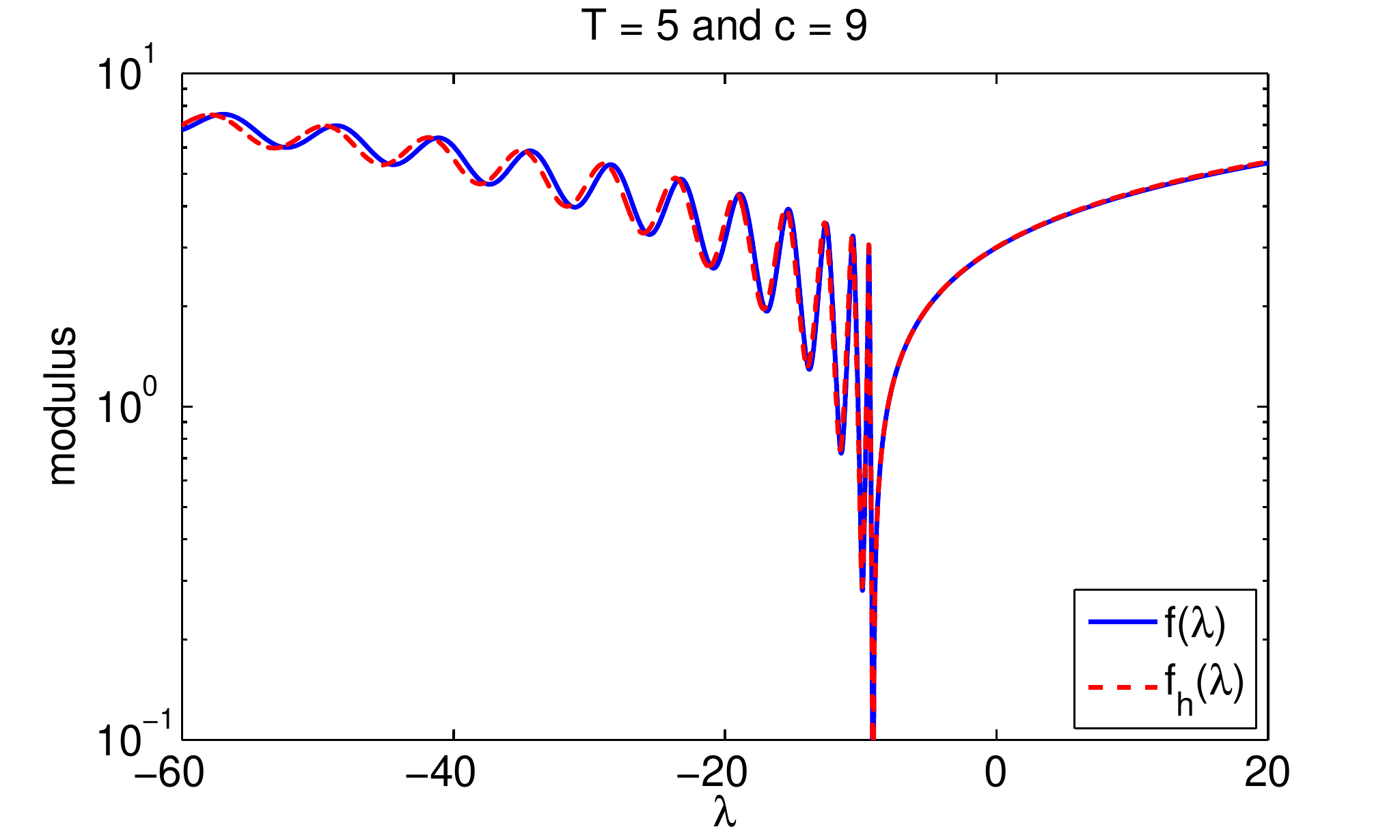}\vspace*{-5mm}
\hspace*{3mm}\begin{minipage}{16cm}
\vspace*{-55.5mm} \hspace*{5mm} {\footnotesize \color{blue} $5$ real poles $\rightarrow$\hspace*{67mm} no real poles}
\end{minipage}
\vspace*{-3.5mm}
\caption{The DtN function $f$ defined in \eqref{eq:flam}, as well as its  discrete counterpart \eqref{eq:vcdtn}, for two different  choices of the parameters $(T,c)$.}  
\vspace*{-2mm}
\label{fig:cntdtn}
\end{figure}

\begin{lemma}\label{lem:nyquist}
The function $f$ defined in~\eqref{eq:flam} can be analytically continued from $\lambda > \max \{ 0,-c\}$ through the upper half plane to the whole real axis except for two ramification points $\lambda = 0$ and $\lambda = -c$ and possibly a finite number of poles. 
For $c > 0$, the function $f$ has no poles on the real axis. 
For $c<0$, the function $f$ has $\left\lfloor \frac{T \sqrt{-c}}{\pi} \right\rfloor + q$ real poles, where $q\in \{0,1\}$, all located in the interval $(0,-c)$. 
\end{lemma}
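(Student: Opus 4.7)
The plan is to first rewrite $f$ in a form that makes its ramification structure transparent, and then reduce pole counting to a one-variable transcendental equation resolvable by monotonicity. The key observation is that $\cosh(Tw)$ and $\sinh(Tw)/w$ are even entire functions of $w$, hence entire functions of $w^{2}=\lambda+c$. Setting
\[
C(\lambda):=\cosh(T\sqrt{\lambda+c}),\qquad S(\lambda):=\frac{\sinh(T\sqrt{\lambda+c})}{\sqrt{\lambda+c}},
\]
both entire in~$\lambda$, and extracting a factor $\sqrt{\lambda+c}$ from the denominator of~\eqref{eq:flam}, one obtains
\[
f(\lambda)=\frac{(\lambda+c)\,S(\lambda)+\sqrt{\lambda}\,C(\lambda)}{C(\lambda)+\sqrt{\lambda}\,S(\lambda)}.
\]
In this form the only remaining multi-valued factor is $\sqrt{\lambda}$, which carries a ramification point at $\lambda=0$; the point $\lambda=-c$ is at worst a removable singularity (a quick sign check $w\mapsto -w$ shows that the original~\eqref{eq:flam} is in fact invariant under the monodromy around $-c$, so it is listed in the statement purely because $\sqrt{\lambda+c}$ appears formally). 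Hence $f$ continues analytically from $\lambda>\max\{0,-c\}$ through the upper half-plane to $\mathbb{R}\setminus\{0,-c\}$, with real poles located precisely at the real zeros of $D(\lambda):=C(\lambda)+\sqrt{\lambda}\,S(\lambda)$.

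Next I would perform a case analysis of $D(\lambda)$ using the sign conventions $\sqrt{\lambda}\mapsto i\sqrt{-\lambda}$ for $\lambda<0$ and $\sqrt{\lambda+c}\mapsto i\sqrt{-\lambda-c}$ for $\lambda<-c$ inherited from upper-half-plane continuation. For $c>0$, each of the three regimes $\lambda>0$, $-c<\lambda<0$, $\lambda<-c$ gives a nonvanishing real or imaginary part of $D$: on the first two the real part $\cosh(\cdot)$ is strictly positive, and on the third $D=\cos(T\mu)+i\nu\sin(T\mu)/\mu$ with $\mu=\sqrt{-c-\lambda}$, $\nu=\sqrt{-\lambda}$ cannot have $\cos(T\mu)$ and $\sin(T\mu)$ both zero. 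Hence no real poles when $c>0$. For $c<0$, the same type of argument rules out $\lambda>-c$ and $\lambda<0$, leaving only $\lambda\in(0,-c)$, where $\sqrt{\lambda+c}=i\mu$ with $\mu=\sqrt{-c-\lambda}>0$ and $\sqrt{\lambda}>0$, so that $D(\lambda)=0$ becomes
\[
\mu\cos(T\mu)+\sqrt{\lambda}\,\sin(T\mu)=0\quad\Longleftrightarrow\quad \tan(T\mu)=-\frac{\mu}{\sqrt{\lambda}}.
\]

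Finally I would count solutions by a monotonicity argument. Parameterizing by $\theta=T\mu\in(0,T\sqrt{-c})$, the right-hand side equals $-\theta/\sqrt{-cT^{2}-\theta^{2}}$, which is continuous and strictly decreasing from $0^{-}$ to $-\infty$. The left-hand side $\tan\theta$ is non-negative on each $[k\pi,(k+\tfrac12)\pi)$ (no intersection) and strictly increasing from $-\infty$ to $0$ on each $((k-\tfrac12)\pi,k\pi)$, so on every such negative branch fully contained in $(0,T\sqrt{-c})$ there is exactly one intersection. With $N=\lfloor T\sqrt{-c}/\pi\rfloor$ this gives $N$ complete negative branches and hence $N$ guaranteed poles; a further pole appears in the final partial branch iff $T\sqrt{-c}-N\pi>\pi/2$, which yields the stated $N+q$ with $q\in\{0,1\}$. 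I expect the main obstacle to be the bookkeeping in the middle step: one must carefully track the branch choices of both square roots under upper-half-plane continuation, and verify that a vanishing of the symmetric quantity $1+cS^{2}=(C+\sqrt{\lambda}\,S)(C-\sqrt{\lambda}\,S)$ comes from the factor $C+\sqrt{\lambda}\,S$ producing a genuine pole rather than from the factor $C-\sqrt{\lambda}\,S$, whose vanishing is cancelled by a simultaneous zero of the numerator $cSC+\sqrt{\lambda}$ and hence corresponds to a removable singularity.
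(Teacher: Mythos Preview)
Your proposal is correct and follows the paper's overall strategy---rule out denominator zeros off $(0,-c)$ by real/imaginary-part inspection, then count zeros inside by tracking sign changes over $\pi$-length subintervals---but your execution differs in two useful ways. First, your entire-function rewriting via $C(\lambda)=\cosh(T\sqrt{\lambda+c})$ and $S(\lambda)=\sinh(T\sqrt{\lambda+c})/\sqrt{\lambda+c}$ handles the analytic continuation claim more transparently than the paper does, and in fact shows that $\lambda=-c$ is removable rather than a genuine ramification point. Second, where the paper argues sign changes of $h(z)=z\cos(Tz)+\sqrt{-c-z^2}\,\sin(Tz)$ directly on the halves of $I_k=((k-1)\pi/T,k\pi/T]$, you pass to the equation $\tan\theta=-\theta/\sqrt{-cT^{2}-\theta^{2}}$ and use monotonicity of both sides; this is equivalent but yields the explicit criterion $T\sqrt{-c}-N\pi>\pi/2$ for the extra pole, which the paper leaves as ``may or may not.'' Your closing worry about numerator--denominator cancellation dissolves quickly in your own variables: if $C+\sqrt{\lambda}\,S=0$ and $(\lambda+c)S+\sqrt{\lambda}\,C=0$ held simultaneously, substituting the first into the second gives $cS=0$, hence $S=C=0$, contradicting $C^{2}-(\lambda+c)S^{2}=1$.
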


\begin{proof}
We investigate the roots of the denominator  
$g(\lambda)=\sqrt{\lambda+c}\cdot \cosh(T\sqrt{\lambda+c})  + \sqrt{\lambda}\cdot \sinh(T\sqrt{\lambda+c})$.
We first consider the case $c<0$ and argue that there are no real roots of $g$ outside $[0,-c]$.  For $\lambda < 0$, the factors $\sqrt{\lambda + c}$ and $\sqrt{\lambda}$ are purely imaginary and nonzero, while $\cosh(T\sqrt{\lambda+c}) = \cos(Tz)$ is purely real and $\sinh(T\sqrt{\lambda+c}) = i\sin(Tz)$ purely imaginary (here and throughout the proof $z=\mathrm{imag}(\sqrt{\lambda+c})$). Hence, $\lambda$ can only be a root of $g$ if $\cos(Tz) = \sin(Tz) = 0$,  but this cannot happen as $\cos(\cdot)$ and $\sin(\cdot)$ do not have any roots in common. A similar argument shows that there are no roots of $g$ for $\lambda>-c$. 

For $\lambda\in (0,-c)$, $z = \mathrm{imag}(\sqrt{\lambda+c})$ varies in $(0,\sqrt{-c})$ and we want to count the number of roots of the purely imaginary function 
$h(z) = g(\lambda) = iz\cos(Tz) + \sqrt{z^2+c}\cdot\sin(Tz)$ 
on that interval. Consider the subintervals $I_k=((k-1)\pi/T,k\pi/T]$ for $k=1,2,\ldots,K = \lfloor T\sqrt{-c}/\pi \rfloor$. Then on the first half of each $I_k$ the function $\mathrm{imag}(h)$ is strictly positive (or negative), while on the second half it is strictly monotonically decreasing (increasing) with a sign change. Therefore each $I_k$ contributes exactly one root of $h$.  The final interval $(K\pi/T, \sqrt{-c})$ may or may not contain a further root of $h$. By the same argument one shows that the roots of the numerator of $f$ are located on the first half's of $I_k$, and hence the roots of the denominator do not cancel out.

For $c\geq 0$ one argues similarly to the first part of the proof that the denominator function $g$ has no roots for all real values of $\lambda$. 
 \hfill $\square$
\end{proof}

To interpret this result in terms of the indefinite Helmholtz equation $(\partial_{yy}   + \partial_{zz}) u + (k_\infty^2 - c(x))u = 0$ for $c<0$, first note that the DtN function \eqref{eq:flam} does not depend on $k_\infty$, but merely on the  offset $c$. We may therefore  set $k_\infty = 0$, in which case the wave number on the first finite layer is simply $k = \sqrt{-c}$. Furthermore, $\ell = 2\pi/k = 2\pi/\sqrt{-c}$ is the corresponding wavelength. Using this notation, Lemma~\ref{lem:nyquist} states that $f$ has $\approx 2T/\ell$ poles on the real axis, that is, \emph{two real poles per wavelength!}

Although Lemma~\ref{lem:nyquist} is stated for the continuous waveguide problem, discrete DtN functions $f_h$ seem to have poles very close to those of their continuous counterparts $f$. An example is shown in Figure~\ref{fig:cntdtn} (dashed red curve), which corresponds to \eqref{eq:vcdtn} with ``piecewise'' constant coefficients $c_j$ and $h=0.05$.

Returning to the RKFIT convergence, we observed in the experiments in section~\ref{sec:conv2} that the minimal number $n$ of RKFIT-FD grid points required to achieve convergence does not seem to be directly linked to the Nyquist criterion. Although $f_h$ may have a large number $\text{N}$ of singularities on the spectral interval of $A$, RKFIT's spectral adaptation capabilities mean that $r_n$ does not need to resolve them all, and therefore the degree $n$ can be significantly smaller than $\text{N}$. Although Lemma~\ref{lem:nyquist} effectively states a Nyquist-type criterion for the layered waveguide, from a rational approximation point of view RKFIT-FD grids can outperform it in case of a favourable spectral distribution of the matrix $A$.

\bibliography{paper}

\end{document}